
\documentclass[11pt, sort&compress]{elsarticle} 


\usepackage{fullpage}
\usepackage{amsfonts} 
\usepackage{graphics}


\usepackage{here}

\usepackage{url}      

\usepackage{graphicx}
\usepackage{amsmath, amsthm, amssymb}
\usepackage{amssymb,amsmath,graphics,color}


\usepackage{multirow}

\usepackage{tocloft}



\allowdisplaybreaks[4] 


\newlength{\bibitemsep}\setlength{\bibitemsep}{.2\baselineskip plus .05\baselineskip minus .05\baselineskip}
\newlength{\bibparskip}\setlength{\bibparskip}{0pt}
\let\oldthebibliography\thebibliography
\renewcommand\thebibliography[1]{%
  \oldthebibliography{#1}%
  \setlength{\parskip}{\bibitemsep}%
  \setlength{\itemsep}{\bibparskip}%
}


\usepackage{refcheck} 
\norefnames	
\nocitenames 	
\ignoreunlbld 
\setoffmsgs 


\usepackage{xcolor}

\definecolor{darkbrown}{rgb}{.5,.1,.1} 
\usepackage{xcolor}

\def\futur #1{}

\def\emevder#1{}

\def\eme#1{}
\def\emeref#1{}

\def\opt#1{}





\newtheorem{thm}{Theorem}[]
\newtheorem{cor}[thm]{Corollary} 
\newtheorem{prop}[thm]{Proposition} 

\newtheorem{definition}[thm]{Definition}
\newtheorem{observation}[thm]{Observation}


\def\newline{\hfill\break}


\def\Int{\mathrm{Int}}

\def\Ext{\mathrm{Ext}}

\def\min{\mathrm{min}}


\def\ep{\varepsilon}
\def\io{\iota}

\def\s{\setminus}


\def\bs{\bigskip}

\def\ss{\smallskip}





\newcommand{\overbar}[1]{\mkern 1.5mu\overline{\mkern-1.5mu#1\mkern-1.5mu}\mkern 1.5mu}
\def\bar{\overbar}    


\def\ra{\rightarrow}

\def\M{M}
\def\N{N}

\title{On Tutte polynomial expansion formulas \\ 
in perspectives of matroids and oriented matroids}


\author[lirmm]{Emeric Gioan\corref{cor1}}
\ead{emeric.gioan@lirmm.fr}

\cortext[cor1]{Email: emeric.gioan@lirmm.fr}

\address[lirmm]{CNRS, LIRMM, Universit\'e de Montpellier, France}

\date{}


\begin{document}

\begin{abstract} 
%
%
%
We introduce the active partition of the ground set of an oriented matroid perspective (or quotient, or strong map) on a linearly ordered ground set. 
The reorientations obtained by arbitrarily reorienting parts of the active partition share the same active partition.
%
This yields an equivalence relation for the set of reorientations of an oriented matroid perspective, whose classes are enumerated by coefficients of the Tutte polynomial,
and a remarkable partition of the set of reorientations into boolean lattices, 
from which we get a short direct proof of a 4-variable expansion formula for the Tutte polynomial in terms of orientation activities.
This formula was given in the last unpublished preprint by Michel Las Vergnas; the above equivalence relation and notion of active partition generalize 
 a former construction  in oriented matroids by Michel Las Vergnas and the author; and the 
possibility
of such a 
proof technique in perspectives was announced in the aforementioned preprint.
We also briefly highlight how the 5-variable expansion of the Tutte polynomial in terms of subset activities in matroid perspectives 
comes in a similar 
way from the known partition of the power set of the ground set into boolean lattices related to subset activities 
(and we complete the proof with a property which was missing in the literature).\break
In particular, the paper applies to matroids and oriented matroids on a linearly ordered ground~set.
\end{abstract}

\maketitle   

\noindent\emph{Dedicated to Michel Las Vergnas' spirit.}


\emevder{faire corrections d'anglais, comme dans chapter, voir fichier anglais.tex  passage "SUR CHAPTER"}%

\section{Introduction}
\label{sec:intro}

The paper deals with (oriented) matroid perspectives on a linearly ordered ground set, with the Tutte polynomial in terms of subset activities, and with the Tutte polynomial in terms of orientation activities.
Let us situate these aspects with respect to the literature, and present the contributions of the paper. Formal definitions and general statements are given in the next sections.%
\bs

An ordered pair $(M,N)$ of (oriented) matroids forms a perspective  (also called morphism, or quotient, or strong map in the literature, up to unimportant variants), 
when they satisfy a certain structural relationship,
which is notably consistent with
linear maps in (real) vector spaces, and with (directed) graph homomorphisms in 
graphs.
%
See \cite{Ku86} or \cite[Section 7.3]{Ox92} (see also \cite{LV80,LV84}) in matroids, and  \cite[Section 7.7]{OM99} in oriented matroids.
\futur{ref chapter tutte}%
The Tutte polynomial of 
such a perspective 
has been defined in \cite{LV80, LV99} 
in terms of rank functions as:
$$t(M,N;x,y,z)\ =\  \sum_{A\subseteq E}\ (x-1)^{r(N)-r_{N}(A)}\ (y-1)^{\mid A\mid-r_M(A)} \ z^{r(M)-r(N)-\bigl(r_{M}(A)-r_{N}(A)\bigr)}.$$
Ever since, this polynomial has been mainly studied  by Michel Las Vergnas in a series of papers, including \cite{LV84, LV12, LV13}.
See  \cite{GiChapterPerspectives} for a recent 
survey.
In the particular case where $M=N$, one retrieves the usual Tutte polynomial of the matroid $M$.
\bs

The main novelty of the present paper concerns oriented matroid perspectives.
Let us give a glimpse 
in terms of 
a single oriented matroid $M$ on a linearly ordered set~$E$. The Tutte polynomial of $M$ is known to have the following expression \cite{LV84}:
$$t(M;x,y)\ =\ \sum_{A\subseteq E}
\ {\Bigl({x\over 2}\Bigr)}^{o^*(-_AM)}\ {\Bigl({y\over 2}\Bigr)}^{o(-_AM)}
$$
where $o^*(-_AM)$, resp. $o(-_AM)$, is the dual orientation activity, resp. orientation activity, of the oriented matroid $-_AM$, which counts smallest elements of positive cocircuits, resp. circuits.
See  
\cite{LV84, AB2-b} 
for interpretations and applications of these orientation activity parameters.
\futur{or chapter tutte om}%
In \cite{LV12}, a 4-variable expansion is given:
$$t(M;x+u,y+v)\ =\ \sum_{A\subseteq E} x^{\theta^*_M(A)}u^{\bar\theta^*_M(A)}y^{\theta_M(A)}v^{\bar\theta_M(A)}$$
where the parameters used for exponents refine orientation activities, depending on $M$ and $A$, 
not only on $-_AM$.
Actually, both \cite{LV84} and \cite{LV12} work at the level of oriented matroid perspectives (see general statements in Section \ref{sec:orientations}), and a detailed example is given in \cite{LV12}.
The proof proposed in \cite{LV12} is by deletion/contraction and is rather technical%
\footnote{Let us warn the reader that \cite{LV12} is an unpublished preliminary preprint, containing some inaccuracies. For instance, \cite[Equation (4) page 3]{LV12} is not correct (the parameters $cr$ and $nl$ are missing, see \cite{GoTr90,LV13}). More importantly, the definition of the refined active bijection  given at the end of \cite[page 20]{LV12} is not correct: it is not complete and given with a wrong parameter correspondence. The correct definition and correspondence are given in \cite{ABG2, AB2-b}, consistently with the definitions previously given in \cite{Gi02,  GiLVEurocomb07}.
}%
.
Also, it is suggested 
that an alternative proof could be obtained more structurally, using active partitions and activity classes of oriented matroids addressed in \cite{Gi02, GiLV05, ABG2, GiLVEurocomb07, AB2-b} (see also \cite{GiChapterOriented} for an overview).

In Section \ref{sec:orientations}, we give such a short direct proof.
Furthermore, we also give it at the level of oriented matroid perspectives. For that purpose, we first introduce the active partition of the ground set of an oriented matroid perspective on a linearly ordered ground set.
Then, we show that the reorientations obtained by arbitrarily reorienting parts of the active partition share the same active partition, 
a result which generalizes the same property known for oriented matroids.
This yields an equivalence relation for the set of reorientations,
whose classes are enumerated by coefficients of the Tutte polynomial,
and a remarkable structural partition of this set 
into boolean lattices called activity classes.
The enumerative counterpart of this partition turns out to be exactly the aforementioned  4-variable expansion formula for the Tutte polynomial.
Various corollaries are derived, such as enumerating remarkable subsets of reorientations by Tutte polynomial evaluations.%
\bs

As far as matroids are concerned, for a matroid $M$ on a linearly ordered set $E$, the Tutte polynomial has the two well-known following expressions \cite{Tu54, Cr69}:
%
$$t(M;x,y)\ =\ \sum_{A\subseteq E}\ (x-1)^{r(M)-r(A)}\ (y-1)^{|A|-r(A)}\ 
=\ \sum_{B\text{ basis}}\ x^{\io(B)}\ y^{\ep(B)}$$
where $\io(B)$, resp. $\ep(B)$, is the internal, resp. external, activity of the basis $B$.
A striking result is that those two expressions can be unified under the following 4-variable expansion:
$$t(M;x+u,y+v)\ =\ \sum_{A\subseteq E}\ x^{\io(A)}\ u^{r(M)-r(A)}\ y^{\ep(A)}\ v^{|A|-r(A)},$$
where the notions of activities are extended to subsets.
This result has been shown for matroids 
in \cite{GoTr90} (actually in a more general form).
It has been generalized  in \cite{LV13} to a 5-variable expansion formula the Tutte polynomial of a matroid perspective, recalled below as Theorem \ref{th:Tutte-4-variables-perspective}.
Let us mention that, using suitable variable specifications, one can obtain further expressions for the Tutte polynomial, see a list in  \cite{GoTr90} completed in~\cite{LV13}.

As precisely addressed in \cite{LV13}, the notion of subset activities is related to 
the following classical result: 
the set $2^E$ of subsets of $E$ is partitioned into boolean intervals of type
$[\ B\setminus \Int(B),\ B\cup \Ext(B)\ ]$,
where $B$ ranges over all bases of $M$ (and where $\Int(B)$, resp. $\Ext(B)$, denotes the set of internally, resp. externally, active elements w.r.t. $B$).
This result, known for matroids from 
\cite{Cr69},
%
generalizes to matroid perspectives. 
Actually, it can be obtained using an even more general combinatorial construction \cite{Da81}, which applies to matroids and matroid perspectives as well, as detailed in \cite{LV13} (see also 
\cite{Bj92, LV01} for related results in matroids, 
\cite{GoMM97} for an\break 
independent generalization to greedoids, and \cite{BrGONo09} for an inverse property in matroids).%

In Section \ref{sec:subsets}, 
placed at the level of matroid perspectives,
we sum up this setting and highlight 
how 
the above expansion formula
can be directly derived as the enumerative counterpart of
the aforementioned fundamental structural partition of the power set into intervals.
The proof 
in \cite{LV13} uses this partition in an indirect way, passing through Tutte polynomial derivatives. 
Furthermore, a property of this partition is missing in \cite{LV13}, though  required for the proof and implicitly used.
Here, in  Proposition~\ref{lem:rcd}, we explicitly state and prove this property, hence completing~\cite{LV13}.

\bs

Finally, the point of the paper is to complete \cite{LV12,LV13} and to show how
each Tutte polynomial expansion formula is the enumerative counterpart of a remarkable structural partition of the power set into boolean lattices, arising  as soon as the ground set is linearly ordered.
The 
initial formula enumerates the boolean lattices each as an individual part, and the expansion
formula with two more variables enumerates in addition all subsets within these boolean lattices.

From the stuctural viewpoint, let us mention that active partitions in oriented matroids have further remarkable properties, and that the 
active bijection transforms the formulas in terms of orientation and subset activities into each other, while transforming the boolean lattices of the two constructions into each other, e.g. 
see \cite{Gi02, GiLV05, ABG2, GiLVEurocomb07, AB2-b}.
\futur{tutte chapter? ab2-b d'abord seule?}%
The generalization of such properties and bijections to oriented matroid perspectives is work in progress.


%




\section{Building on orientation activities in oriented matroid perspectives}
\label{sec:orientations}


The reader is referred to \cite{OM99} for background on oriented matroids, and in particular to \cite[Section 7.7]{OM99} for details on oriented matroid perspectives.
%
%
Unless it is ambiguous, we usually 
use
the same notation for a signed set and its support.
%
Let us recall that two signed sets are conformal if they have the same signs on their intersection, and that a vector, resp. covector, 
in an oriented matroid can be defined as a conformal union of circuits, resp. cocircuits.
An ordered pair $(M,N)$ of oriented matroids on the same ground set form an \emph{oriented matroid perspective} (also called \emph{strong map} or \emph{quotient map}), denoted $M\ra N$, if one of the following equivalent properties~holds%
\footnote{%
We choose to denote $M\ra N$ rather than $M\ra M'$ which is commonly used in  references on (oriented) matroid perspectives (such as \cite{LV12,LV13}) in order to avoid  confusions with the prime used in the notations for the cyclic parts (involving orientation-active elements and complementaries of flats of the dual) in the active decomposition of a single oriented matroid.
Indeed, the present paper mixes the two settings, involving two single oriented matroids and the active decomposition of each of them
(see details and references below Definition \ref{def:active-partition}).
 }%
:
\begin{itemize}
\itemsep=0mm
\partopsep=0mm 
\topsep=0mm 
\parsep=0mm
\item any circuit of $M$ is a vector of $N$;
\item any vector of $M$ is a vector of $N$;
\item any cocircuit of $N$ is a covector of $M$;
\item any covector of $N$ is a covector of $M$;
\item no circuit of $M$ and cocircuit of $N$ have a non-empty conformal intersection%
\footnote{%
This characterization is mentioned in \cite{LV12}, but not in the usual reference on oriented matroid theory \cite{OM99}. 
It is direct since vectors are signed subsets orthogonal to all cocircuits:
a signed subset $C$ (meant to be a circuit of $M$) is not a vector of $N$ if and only if $C$ is not orthogonal to some cocircuit of $N$, that is if and only if it has a non-empty conformal intersection with some cocircuit of $N$.
}%
.
\end{itemize}
In particular, for an oriented matroid $M$, 
$M\ra M$ is an oriented matroid perspective.
Actually, if $M\ra N$ is an oriented matroid perspective and $r(M)=r(N)$ then $M=N$.
Moreover,
if $M\ra N$ is an oriented matroid perspective then $N^*\ra M^*$ is also an oriented matroid perspective.
Let us mention that the underlying matroids form a matroid perspective, and that, in contrast, an oriented matroid perspective 
is not equivalent to
 an extension followed by a contraction (see Section~\ref{sec:subsets}).

%

\bs

%
Let $M$ be an oriented matroid on a linearly ordered set $E$.
Following {\cite{LV84}}, we denote
\vspace{-0.2cm}
\begin{eqnarray*}
O(M) & =& \bigl\{\ \min(C)\mid \text{ $C$ positive circuit of $M$}\ \bigr\}, \\
O^*(M) & =& \bigl\{\ \min(C)\mid \text{ $C$ positive cocircuit of $M$}\ \bigr\}. 
\end{eqnarray*}
The sets $O(M)$ and $O^*(M)$ are respectively known as the sets of \emph{active} and \emph{dual-active} elements of $M$.
The parameters $| O(M) |$ and $| O^*(M) |$ are respectively known as the \emph{(orientation) activity} and \emph{dual (orientation) activity}  of $M$.
%
%
%
%
%
%
%
%
One can observe that $O^*(M)=O(M^*)$, that $O(M)\cap O^*(M)=\emptyset$, that $O(M)=\emptyset$ if and only if $M$ is acyclic, and that $O^*(M)=\emptyset$ if and only if $M$ is totally cyclic.
%
\bs

Let $M\ra N$ be an oriented matroid perspective on a linearly ordered set $E$. 
By {\cite{LV84}}, 
its Tutte polynomial satisfies
$$t(M,N;x,y,1)=\sum_{A\subseteq E}
\ {\Bigl({x\over 2}\Bigr)}^{| O^*(-_AN) |}\ {\Bigl({y\over 2}\Bigr)}^{| O(-_AM) |}.
$$

%
Note that, in particular, $t(M,N;0,0,1)$ is equal to the number of subsets $A$ of $E$ such that $-_AM$ is acyclic and $-_AN$ is totally cyclic
(see \cite{LV84}\futur{ref chapter} 
for some interpretations of this special evaluation).
%
%
For such an oriented matroid perspective $M\ra N$, 
the sets $O(M)$ and $O^*(N)$ will be respectively called, by extension of the terminology, the sets of \emph{active} and \emph{dual-active} elements of $M\ra N$. 

%

\bs

Now, we  introduce a new construction, generalizing  to oriented matroid perspectives a construction in oriented matroids detailed in \cite{Gi02,AB2-b} 
(see also \cite{GiLV05,ABG2} in graphs).
We refer the reader to these references for geometrical interpretations, that can readily be adapted in the setting of oriented matroid perspectives.
\bs



Let $M\ra N$ be an oriented matroid perspective on a linearly ordered set $E$
with%
\begin{eqnarray*}
O(M) & =& \{g_1, \ldots, g_\ep\}_<, \\
O^*(N) & =& \{h_1, \ldots, h_\io\}_<. 
\end{eqnarray*}
We define $G_\ep=\emptyset$ and, for  $0\leq k\leq\ep-1$, we define
$$G_{k}\ =\ \bigcup_{\substack{ C \hbox{\small\ positive circuit of } M\\{\min(C)\ \geq\ g_{k+1}}}}C.$$
We define $H_\io=E$ and, for  $0\leq k\leq\io-1$, we define
$$H_k\ =\ E\ \s \ \bigcup_{\substack{ C \hbox{\small\ positive cocircuit of } N\\{\min(C)\ \geq\ h_{k+1}}}}C.$$
%
%
Observe that $G_0$ is the union of positive circuits of $M$ (i.e. the maximal positive vector of $M$), $E\s G_0$ is the union of positive cocircuits of $M$ (i.e. the maximal positive covector of $M$), $H_0$ is the union of positive circuits of $N$ (i.e. the maximal positive vector of $N$), and $E\s H_0$ is the union of positive cocircuits of $N$ (i.e. the maximal positive covector of $N$).
Hence, by definition of an oriented matroid perspective, we have
$$G_0\ \subseteq\ H_0.$$
Observe that we can have $G_0= H_0$, e.g. when $M=N$, and that we can also have \hbox{$\emptyset=G_0\subset H_0=E$}, when $M$ is acyclic and $N$ totally cyclic, as for the reorientations mentioned above counted by $t(M,N;0,0,1)$.

\begin{definition}
\label{def:active-partition}
We call \emph{active filtration of $M\ra N$} the sequence of nested  subsets
$$\emptyset=G_\ep\ \subset\  \ldots\ \subset\  G_1\ \subset\  G_0\ \subseteq\  H_0\ \subset\   H_1\  \subset\  \ldots\ \subset\  H_\io = E.$$
And we call \emph{active partition of $M\ra N$} the partition of $E$ given by
$$E\ =\  (G_{\ep-1}\s G_\ep)\ \uplus\ \dots \ \uplus\ (G_{0}\s G_1)\ \uplus\ (H_{0}\s G_0)\ \uplus\ (H_1\s H_0)\ \uplus\ \dots \ \uplus\ (H_{\io}\s H_{\io-1}).$$
The parts contained in $G_0$ are called the \emph{cyclic parts}, 
the parts contained in $E\s H_0$ are called the \emph{acyclic parts}, 
and the part $H_{0}\s G_0$ (possibly empty) is called the \emph{hybrid part}.
We assume that the active filtration and active partition are always given along with the sets $G_0$ and $H_0$ (hence one can deduce the active filtration from the active partition and vice versa). 
\end{definition}

The above definition generalizes
active filtrations and active partitions of oriented matroids on a linearly ordered ground set, as defined in \cite{Gi02, AB2-b} (see also \cite{GiLV05,ABG2} in graphs). When $M=N$ the definitions coincide, and, in general, the acyclic parts for $M\ra N$ equal the acyclic parts for $N$, and the cyclic parts for $M\ra N$ equal the cyclic parts for~$M$.
%
\ss

Moreover, note that we have,
for $1\leq k\leq \ep$,  
\vspace {-2mm}
\begin{eqnarray*}
g_k&=&\min(\ G_{k-1}\setminus G_{k}\ ), \\[2mm]
G_{k-1}\setminus G_{k}&=& 
\bigcup_{\substack{ C \hbox{\small\ positive circuit of } M\\{\min(C)\ =\ g_{k}}}}C
\ \ \setminus\ 
\bigcup_{\substack{ C \hbox{\small\ positive circuit of } M\\{\min(C)\ >\ g_{k}}}}C,
\end{eqnarray*}
and, for $1\leq k\leq\io$, 
\vspace {-2mm}
\begin{eqnarray*}
h_k&=&\min(\ H_{k}\setminus H_{k-1}\ ),\\[2mm]
H_{k}\setminus H_{k-1}&=&
\bigcup_{\substack{ C \hbox{\small\ positive cocircuit of } N\\{\min(C)\ =\ h_{k}}}}C
\ \ \setminus\ 
\bigcup_{\substack{ C \hbox{\small\ positive cocircuit of } N\\{\min(C)\ >\ h_{k}}}}C.
\end{eqnarray*}
\begin{prop}
\label{thm:act-classes}
\futur{proposition ou theorem ?}%
Let $M\ra N$ be an oriented matroid perspective on a linearly ordered set $E$.
Let $A\subseteq E$ be any union of parts of the active partition of $M\ra N$. 
Then the oriented matroid perspective $-_AM\ra -_AN$ has the same active partition as $M\ra N$ (meaning precisely the same cyclic, acyclic and hybrid parts).
%
\end{prop}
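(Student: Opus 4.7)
The plan is to generalize the argument for a single oriented matroid (see~\cite{Gi02,AB2-b}) to the setting of an oriented matroid perspective. I first check that $-_AM\to -_AN$ is still an oriented matroid perspective, by observing that simultaneously reorienting $M$ and $N$ by the same set $A$ preserves the characterization ``no circuit of $M$ and cocircuit of $N$ have non-empty conformal intersection''. Since the active filtration of $M\to N$ is entirely determined by the family of positive circuits of $M$ and the family of positive cocircuits of $N$ (together with their smallest elements), the claim reduces to showing
\[
G_k(-_AM)=G_k(M)\quad(0\leq k\leq \ep),\qquad H_k(-_AN)=H_k(N)\quad(0\leq k\leq \io).
\]
Under matroid duality $(M\to N)^\ast=N^\ast\to M^\ast$, which exchanges circuits with cocircuits and commutes with reorientation, the second family of equalities follows from the first applied to the dual perspective; so I focus on the first.

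Next, I decompose $A=A_c\cup A_h\cup A_a$ with $A_c=A\cap G_0$, $A_h$ equal to either $\emptyset$ or the whole hybrid part $H_0\setminus G_0$, and $A_a=A\cap(E\setminus H_0)$. Since cyclic parts of $M\to N$ coincide with cyclic parts of $M$, $A_c$ is a union of parts of $M$'s own active partition, and the single oriented matroid version of the proposition applied to $M$ yields $G_k(-_{A_c}M)=G_k(M)$. Moreover, $A_h\cup A_a$ is disjoint from $G_0$ and hence from every positive circuit of $M$, so reorientation on $E\setminus G_0$ cannot destroy a positive circuit of $M$ already present; this yields the inclusion $G_k(M)\subseteq G_k(-_AM)$.

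The main obstacle is the reverse inclusion: no ``new'' positive circuit with support reaching outside $G_0(M)$ appears in $-_AM$. Suppose $C$ is a circuit of $M$ with $-_AC$ positive. Since $C$ is a vector of $N$, I conformally decompose $C=\bigcup_i C_i$ with each $C_i$ a circuit of $N$; then each $-_AC_i$ is positive, so $C_i\subseteq H_0(-_AN)$. Running the parallel argument on the $N$-side — decomposing cocircuits of $N$ as conformal unions of cocircuits of $M$, and using that $A_a$ is a union of acyclic parts of $N$ so as to apply the single oriented matroid result for $N$ — I obtain the dual statement $H_0(-_AN)=H_0(N)$, hence $C\subseteq H_0(N)$. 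To upgrade this to $C\subseteq G_0(M)$, I use that the hybrid part is a single part of the active partition (so either entirely in $A$ or entirely in its complement) and that the orthogonality of $C$ with any positive cocircuit of $M$ meeting the hybrid is incompatible with $-_AC$ being positive unless $C$ avoids the hybrid entirely. The refinement to all finer equalities on $G_k$ and $H_k$ then follows by restricting the argument to positive circuits (resp.\ cocircuits) whose smallest element lies above a given threshold; the crux throughout is the interlocking of the $M$-side and $N$-side arguments to handle the hybrid, which belongs to neither $M$'s nor $N$'s own active partition and is precisely where the perspective axiom is used in an essential way.
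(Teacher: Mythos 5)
Your high-level plan (reduce to the $G_k$ and $H_k$, split $A$ into $A_c\cup A_h\cup A_a$, lean on the single-oriented-matroid result, and kill the hybrid part by an all-or-nothing orthogonality argument) is workable, and your hybrid step is essentially sound: granting $C\subseteq H_0(N)$, one has $C\cap D\subseteq H_0\setminus G_0$ for any positive cocircuit $D$ of $M$, so $C$ has constant sign on $C\cap D$ because the hybrid part is reoriented all-or-nothing, and orthogonality forces $C\cap D=\emptyset$, hence $C\subseteq G_0(M)$. The genuine gap is in how you reach $C\subseteq H_0(N)$. You derive it from $H_0(-_AN)=H_0(N)$, obtained by ``running the parallel argument on the $N$-side''; but that parallel argument, made explicit, takes a positive cocircuit of $-_AN$, writes it as a conformal union of positive cocircuits of $-_AM$, and concludes it lies in $E\setminus G_0(-_AM)$ --- at which point it needs $G_0(-_AM)=G_0(M)$, which is exactly the $M$-side statement it is supposed to feed. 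The two halves each presuppose the other, and your sketch acknowledges the ``interlocking'' without breaking it. Nor does the single-oriented-matroid result for $N$ rescue the step: it applies only to $A_a$, since $A_c$ and $A_h$ lie inside $G_0(N)=H_0$ but are in general not unions of $N$'s own cyclic parts, so it says nothing about the positive cocircuits of $-_AN$ for the full set $A$.

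The circularity is repairable, but only by fixing an order that you do not specify: your $M$-side argument needs only the inclusion $H_0(-_AN)\subseteq H_0(N)$, and that inclusion has a non-circular proof (apply the single-oriented-matroid result to $-_{A_a}N$ to get $E\setminus H_0(-_{A_a}N)=E\setminus H_0(N)$, then observe that the positive cocircuits of $-_{A_a}N$ avoid $H_0\supseteq A_c\cup A_h$ and hence survive in $-_AN$, so the union of positive cocircuits can only grow). With that inclusion in hand one completes the $M$-side, and only then runs the $N$-side using the established $G_0(-_AM)=G_0(M)$. The paper avoids this delicate sequencing altogether by never reorienting a general union of parts in one shot: each elementary step reorients a set that is a positive vector of $M$ (some $G_k$), hence a positive vector of $N$ by the perspective axiom, so orthogonality shows instantly that the positive cocircuits of $N$ are untouched; acyclic parts are handled by duality in $N^*\to M^*$, the hybrid part by composing with $-_E$, and general unions by transitivity. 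You should either adopt that compositional scheme or spell out the acyclic ordering of the two interlocked claims; as written, the central implication of your proof is not established.
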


\begin{proof}
The proof is 
fairly simple,
but let us take care to detail it. 
We will use the following notation, with the abuse of identifying a positive signed subset of $E$ with the subset of $E$ which is its support.
For $e\in E$,  $V(M;e)$ denotes the union of all positive circuits of $M$ with smallest element greater than or equal to $e$.
Equivalently, $V(M;e)$  is the largest (for inclusion) positive vector with smallest element greater than or equal to $e$ (obtained by conformal composition of positive circuits).

First, we consider the case where 
$A=V(M;a)$ for 
a given element $a\in O(M)$
(using the previous notations, it means that we have $a=g_{k+1}$ and $A=G_k$ for some $0\leq k\leq \ep-1$).
Obviously, $A$ is also a positive vector of~$-_AM$.
Let us fix  $e\in E$ and let us prove that $V(M;e)=V(-_AM;e)$.


\futur{a t'on vraiment besoin de distinguer $e<a$ et $e>a $ ?}%

Assume that $e\geq a$. 
Let $C$ be a  positive circuit of $M$, with $\min(C)\geq e$. 
Then $C\subseteq A$ by definition of $A$, then $C$ is also a positive circuit of $-_A M$. 
%
Conversely, let $C$ be a  positive circuit of $-_AM$, with $\min(C)\geq e$. 
Then the signed subset $A\circ C$ is a positive vector in $-_AM$. Then, obviously,
this signed subset $A\circ C$ is also a positive vector in $M$. 
This positive vector of $M$ has smallest element $a$, hence it is contained in $A$ by definition of $A$. So we have $C\subseteq A$, and so $C$ is also a positive circuit of $M$.
Finally, when $e\geq a$, $M$ and $-_AM$ have exactly the same positive circuits with smallest element greater than or equal to $e$, hence $V(M;e)=V(-_AM;e)$.
\emevder{XXXX le cas $e\geq a$ de ce paragraphe est-il utile ? ne peut on out traiter d'un coup ? XXXXX}

Assume now that $e<a$. 
Let $C$ be a  positive circuit of $\M$, with $\min(C)\geq e$. Since $A$ is a positive vector of $-_AM$ and $C$ is positive in $-_AM$ on $C\s A$, we have that $A\circ C$ is a positive vector of $-_AM$ with smallest element greater than or equal to $e$. 
This vector can be obtained in $-_AM$ by a conformal composition of positive circuits, so $A\cup C$ is contained in 
$V(-_AM;e)$.
So $V(M;e)\subseteq V(-_AM;e)$.
%
%
Conversely, let $C$ be a  positive circuit of $-_AM$, with $\min(C)\geq e$. 
By the same reasoning, we have that $A\circ C$ is a positive vector of $M$ with smallest element greater than or equal to $e$, that $A\cup C$ is contained in 
$V(M;e)$,
and that  $V(-_AM;e)\subseteq V(M;e)$.

Finally, we have proved that, for every $e\in E$, we have $V(M;e)= V(-_AM;e)$.
 %
%
%
This implies that $O(M)=O(-_AM)$ and, by definition of the active partition, that the cyclic parts of the active partition of $M\ra N$ are equal to the cyclic parts of the active partition of $-_AM\ra -_AN$
(precisley, using the previous notations, we have proved that, for $0\leq k\leq \ep-1$, the sets $G_k$ are the same when built in $M$ or in $-_AM$).

Since $A$ is a union of positive circuits of $M$ and since $M\ra N$ is an oriented perspective, we have that $A$ is a positive vector of $N$. Hence $A$ does not interfere with positive cocircuits of $N$, which remain exactly the same in $-_AN$. Precisely, the acyclic parts of the active partition of  $M\ra N$ are obviously equal to the acyclic parts of the active partition of $-_AM\ra -_AN$.
And so the hybrid part also has to be the same in the active partitions of $M\ra N$ and $-_AM\ra -_AN$.

So we have proved the theorem in the case where
$A=V(M;a)$ for  $a\in O(M)$.
Using the previous notations for the active partition of $M\ra N$, we have proved that $M\ra N$ and $-_AM\ra -_AN$ have the same active partitions when $A=G_k$ for any $0\leq k\leq \ep-1$ (hence for $\ep$ possible non-empty sets $A$).
Then, by transitivity, applying the result for $A=G_k$ and $A=G_{k+1}$, 
we have that $M\ra N$ and $-_AM\ra -_AN$ have the same active partitions when $A=G_k\s G_{k+1}$ for any $0\leq k\leq \ep-1$.
Then, by transitivity again, we have proved the theorem when $A$ is any union of cyclic parts of the active partition of $M\ra N$ (hence for $2^\ep$ possible sets $A$). 

Now, we can use the same reasoning to cocircuits in $N$ and acyclic parts of the active partition of $M\ra N$, which is equivalent to apply the above result in the dual oriented matroid perspective $N^*\ra M^*$.
We obtain that the theorem is true when $A$ is any union of acyclic parts of the active partition of $M\ra N$ (hence for $2^\io$ possible sets $A$). 
By transtivity, combining the two dual results, we have that
the theorem is true when $A$ is any union of cyclic or acyclic parts of the active partition of $M\ra N$ (hence for $2^{\io+\ep}$ possible sets $A$).

Finally, it remains to handle the case where the hybrid part is non-empty.
In this case, as the active partition of $M\ra N$ is obviously the same as the active partition of $-_EM\ra -_E N$, we have that the
theorem is true when $A$ is any union of cyclic or acyclic parts of the active partition of $M\ra N$, or its complementary (hence for $2^{\io+\ep+1}$ possible sets $A$).
This means that $A$ can be  any union of parts of the active partition of $M\ra N$ (cyclic, acyclic, or hybrid).
\end{proof}

Let $M\ra N$ be an oriented matroid perspective on a linearly ordered set $E$.
The set of all $-_AM\ra -_AN$ for $A\subseteq E$ is called the set of \emph{reorientations of $M\ra N$}, being understood that
this set is isomorphic to $2^E$. Note the slight abuse of notations,
as we distinguish for instance between $-_AM\ra -_AN$ and $-_{E\setminus A}M\ra -_{E\setminus A}N$ as reorientations of $M\ra N$, even though 
the two resulting 
oriented matroid perspectives are equal.
%

\begin{definition}
\label{def:act-classes}
Let $M\ra N$ be an oriented matroid perspective on a linearly ordered set $E$.
We define \emph{the activity class of $M\ra N$} as the set of 
its reorientations $-_AM\ra -_AN$ 
where 
$A$ is any union of 
cyclic or acyclic parts of its active partition. 
%
%
Observe that we forbid reorienting the hybrid part%
\footnote{This is done in order to distinguish reorientations from their complementaries when the hybrid part is non-empty. This yields tighter classes, closer to Tutte polynomial properties, and meaningful notably in the proof of Theorem~\ref{th:expansion-reorientations}.}%
.
%
By Proposition \ref{thm:act-classes} and 
extending this definition to all reorientations of  $M\ra N$,
we define \emph{activity classes of reorientations of $M\ra N$}, that partition the set of reorientations of $M\ra N$, and can be seen as classes of the equivalence relation defined by reorienting parts of the active partition.
%
\end{definition}
%
%

The above partition  of the set of reorientations of $M\ra N$ can be  summed up
as follows:
$$2^E\ \sim\ \biguplus_{\substack{\text{activity classes of }M \ra N\\ \text{\tiny (one $-_AM\ra -_AN$ chosen in each class)}}}
\Biggl\{\  2^{|O(-_AM)|+|O^*(-_AN)|} \ \
\substack{\text{\small reorientations obtained by}\\
\text{\small  reorienting cyclic and acyclic parts}\\
\text{\small  of the active partition of $-_AM\ra -_AN$}
}
\ \Biggr\}.$$
For practical purposes, it is also convenient to transpose this equivalence relation in terms of subsets $A\subset E$. 
In this setting, we can identify a reorientation $-_AM\ra -_AN$ with the subset $A\subseteq E$ which defines it. 
Let us fix  $A\subseteq E$. 
The active partition of $-_A\M\ra -_AN$
can be shortly denoted as:
$$E\ =\ A_0\ \cup\ \biguplus_{a\; \in\; O(-_A\M)\; \cup\; O^*(-_AN)}A_a$$
where $A_0$ is the hybrid part, and where $A_a$ is either a cyclic or an acyclic part, with $a=\min(A_a)$ for $a\in O(-_A\M)\cup O^*(-_AN)$.
Then, 
the activity class of  $-_A\M\ra -_AN$ can be renamed
the \emph{activity class of $A\subseteq E$ w.r.t. $M\ra N$} and denoted%
\footnote{The choice of letters $P$ and $Q$, in that order, respectively associated   with dual-activity in $N$ and activity in $M$, might seem non-natural, but it is  deliberate for the sake of matching with variables $x$ and $y$ respectively (and with internal and external subset activities respectively, as in Section \ref{sec:subsets} and in oriented matroids \cite{ABG2,AB2-b,LV13}).}%
:
$$cl_{M\ra N}(A)=\Biggl\{ \ A\ \triangle\ \Bigl(\ \biguplus_{a\in P\cup Q}A_a\ \Bigr)\ \ \mid \ \ P\subseteq O^*(-_AN), \ Q\subseteq O(-_AM)\ \Biggr\}.$$
Then, the different sets $cl_{M\ra N}(A)$, for $A\in 2^E$, form a partition of $2^E$:
$$2^E=\biguplus_{\text{\scriptsize (one $A$ chosen in each $cl_{M\ra N}(A)$)}}
cl_{M\ra N}(A)$$
Observe  that each $cl_{M\ra N}(A)$ has a boolean lattice structure, yielding a partition of $2^E$ into boolean lattices.
The above definition and construction generalize
activivity classes of oriented matroids on a linearly ordered ground set, as defined in \cite{Gi02, AB2-b} (see also \cite{GiLV05,ABG2} in graphs). 
\bs

Now, let us follow \cite{LV12} and refine reorientation activities into four parameters,
applied to reorientations of a given reference oriented matroid $\M$.

\begin{definition} 
\label{def:gene-act-ori}
Let $\M$ be an oriented matroid on a linearly ordered set $E$ and $A\subseteq E$. We define:
\vspace{-3mm}
\begin{eqnarray*}
   \Theta_\M(A)&=&O(-_A\M)\s A,  \\
 \bar\Theta_\M(A)&=&O(-_A\M)\cap A, \\ 
   \Theta^*_\M(A)&=&O^*(-_A\M)\s A, \\
\bar\Theta^*_\M(A)&=&O^*(-_A\M)\cap A. 
\end{eqnarray*}
Hence we have $O(-_A\M)=\Theta_\M(A)\uplus\bar\Theta_\M(A)$ 
and (dually) $O^*(-_A\M)=\Theta^*_\M(A)\uplus\bar\Theta^*_\M(A)$.
\end{definition}

\futur{reorganiser ce passage? prop sur unicite de active-fixe?}%
Actually, in oriented matroids and oriented matroid perspectives as well, the  parameters of Definition~\ref{def:gene-act-ori}
situate any reorientation in its activity class seen as a boolean lattice. Let us detail this fundamental feature (in a similar way than done in \cite{ABG2,AB2-b}).
%

\begin{definition} 
\label{def:active-fixed}
Let $\M$ be an oriented matroid on a linearly ordered set $E$ and $A\subseteq E$.
The reorientation $-_AM$ of $M$ is called \emph{active-fixed} if no active element of $-_AM$ is reoriented w.r.t. $M$, that is if $\bar\Theta_\M(A)=O(-_A\M)\cap A=\emptyset$.
It is called \emph{dual-active-fixed} if no dual-active element of $-_AM$ is reoriented w.r.t. $M$, that is if $\bar\Theta^*_\M(A)=O^*(-_A\M)\cap A=\emptyset$.

For an oriented matroid perspective $M\ra N$ on a linearly ordered set $E$ and $A\subseteq E$, by extension of the terminology,
the reorientation $-_AM\ra -_AN$ will be called \emph{active-fixed} if $-_AM$ is active-fixed (w.r.t. $M$)
and \emph{dual-active-fixed} if $-_AN$ is dual active fixed (w.r.t. $N$).
\end{definition}


\begin{observation}
\label{obs:unique-active-fixed}
Let $M\ra N$ be an oriented matroid perspective on a linearly ordered set $E$.
In each activity class of reorientations of  $M\ra N$, 
there is one and only one reorientation $-_AM\ra -_AN$
which is active-fixed and dual-active-fixed
 (that is such that all active elements of $M$ and dual-active elements of $N$ are not reoriented with respect to the initial 
 $\M\ra N$).
\end{observation}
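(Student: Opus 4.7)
The strategy is to exploit Proposition \ref{thm:act-classes}: within a single activity class, all reorientations share the same active partition, the same set $O(-_AM)=\{g_1,\dots,g_\ep\}_<$ of active elements, and the same set $O^*(-_AN)=\{h_1,\dots,h_\io\}_<$ of dual-active elements. Moreover, by construction, each $g_k$ lies in the cyclic part $A_{g_k}$ (indeed $g_k=\min(A_{g_k})$), each $h_k$ lies in the acyclic part $A_{h_k}$, and these parts together with the hybrid part $A_0$ are pairwise disjoint. The plan is to show that the conditions of being active-fixed and dual-active-fixed can be enforced part by part, independently, in a unique way.

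Fix an activity class and let $-_BM\ra -_BN$ be an arbitrary representative. Every element of the class can be written as $-_CM\ra -_CN$ with
$$C\ =\ B\ \triangle\ \Bigl(\biguplus_{k\in I}A_{g_k}\Bigr)\ \triangle\ \Bigl(\biguplus_{k\in J}A_{h_k}\Bigr)$$
for some uniquely determined $I\subseteq\{1,\dots,\ep\}$ and $J\subseteq\{1,\dots,\io\}$ (the hybrid part is never reoriented, by Definition~\ref{def:act-classes}). Since $g_k\in A_{g_k}$ and $g_k$ belongs to no other part, we have $g_k\in C$ if and only if $g_k\in B$ exactly when $k\notin I$, i.e. the boolean ``$g_k\in C$'' is independently toggled by the choice $k\in I$ or $k\notin I$. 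Symmetrically, the boolean ``$h_k\in C$'' is independently toggled by the choice $k\in J$ or $k\notin J$.

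Hence there is one and only one choice of $(I,J)$ making $g_k\notin C$ for all $k\in\{1,\dots,\ep\}$ and $h_k\notin C$ for all $k\in\{1,\dots,\io\}$, namely $I=\{k:g_k\in B\}$ and $J=\{k:h_k\in B\}$. This $C$ is the unique element of the activity class satisfying $O(-_CM)\cap C=\emptyset$ and $O^*(-_CN)\cap C=\emptyset$, i.e.\ the unique active-fixed and dual-active-fixed reorientation in the class.

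The main (mild) point to watch is the independence of the two sides: one must check that reorienting a cyclic part $A_{g_k}$ does not move any $h_{k'}$ into or out of $C$, and vice versa; this is immediate from the disjointness of cyclic and acyclic parts, and from the fact that the hybrid part (which is fixed throughout the class) contains neither active nor dual-active elements. No deeper obstacle arises: the observation is essentially a bookkeeping consequence of Proposition \ref{thm:act-classes} together with the definition $g_k=\min(A_{g_k})$, $h_k=\min(A_{h_k})$.
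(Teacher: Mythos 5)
Your argument is correct and matches the reasoning the paper relies on: the observation is exactly the bookkeeping consequence of Proposition~\ref{thm:act-classes} together with the boolean-lattice description of an activity class, using that each active (resp.\ dual-active) element is the minimum of its own cyclic (resp.\ acyclic) part, that the parts are disjoint, and that the hybrid part contains no active or dual-active elements. The paper states the observation without a separate proof and justifies it by precisely this discussion surrounding $cl_{M\ra N}(A)$, so your proposal fills in the same argument in slightly more explicit form.
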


In other words, 
continuing the above setting, in the activity class $cl_{M\ra N}(A)$, 
one can choose as representative  the subset $A$ such that $$\bar\Theta_\M(A)=O(-_A\M)\cap A=\emptyset\ \ \text{ and }\ \ \bar\Theta^*_\N(A)=O^*(-_A\N)\cap A=\emptyset.$$
So, for this special $A$ and according to the above definition of $cl_{M\ra N}(A)$, an element $A'$ of $cl_{M\ra N}(A)$
is situated in $cl_{M\ra N}(A)$ by $P\subseteq  O^*(-_{A}\N)=O^*(-_{A'}\N)$ and  $Q\subseteq O(-_{A}\M)=O(-_{A'}\M)$ such that%
:
\futur{permuter ordre dans toutes les listes et enonces? d'abord dual/x/P/N puis primal/y/Q/M ?}
\futur{ceci pourrait etre seulement dans la preuve du thm...?}%
\begin{center}
   $
   \begin{array}{lclcl}
\Theta_\M(A')&=&O(-_{A}\M)\s Q,\\
\bar\Theta_\M(A')&=&Q,\\
\Theta^*_\N(A')&=&O^*(-_{A}\N)\s P,\\
\bar\Theta^*_\N(A')&=&P.
   \end{array}
   $
\end{center}


Now, we get the following Theorem \ref{th:expansion-reorientations}, a four variable expression of the Tutte polynomial in terms of the above four parameters, stated and proved by deletion/contraction in \cite{LV12}.
Here, we give a short natural proof of this theorem, using the Tutte polynomial formula in terms of orientation activities, the partition of the set of reorientations into activity classes, and their boolean lattice structure as discussed above.

%
%
%
%
%
%

\begin{thm}[\cite{LV12}]
\label{th:expansion-reorientations}
Let $M\ra N$ be an oriented matroid perspective on a linearly ordered set $E$. We have
\begin{large}
$$t(M,N;x+u,y+v,1)\ =\ \sum_{A\subseteq E} \ x^{|\Theta^*_\N(A)|}\ u^{|\bar\Theta^*_\N(A)|}\ y^{|\Theta_\M(A)|}\ v^{|\bar\Theta_\M(A)|}.$$
\end{large}
\end{thm}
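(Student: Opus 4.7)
The plan is to derive the formula directly from three ingredients already established: the Las Vergnas formula $t(M,N;x,y,1)=\sum_{A\subseteq E}(x/2)^{|O^*(-_AN)|}(y/2)^{|O(-_AM)|}$ recalled at the start of Section~\ref{sec:orientations}, the partition of the reorientation set into activity classes (Proposition~\ref{thm:act-classes} and Definition~\ref{def:act-classes}), and the parametrisation of each class as a boolean lattice via the unique active-fixed and dual-active-fixed representative (Observation~\ref{obs:unique-active-fixed}).

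First, I rewrite the known 2-variable formula as a sum over activity classes. By Proposition~\ref{thm:act-classes}, all reorientations $-_{A'}M\to -_{A'}N$ in a given activity class share the same sets $O(-_{A'}M)$ and $O^*(-_{A'}N)$, and the class contains exactly $2^{|O(-_{A}M)|+|O^*(-_{A}N)|}$ reorientations (one per choice of reorientation of cyclic and acyclic parts). Hence
\[
t(M,N;x,y,1)\ =\ \sum_{\text{classes}}2^{|O(-_{A}M)|+|O^*(-_{A}N)|}\Bigl(\tfrac{x}{2}\Bigr)^{|O^*(-_{A}N)|}\Bigl(\tfrac{y}{2}\Bigr)^{|O(-_{A}M)|}\ =\ \sum_{\text{classes}}x^{|O^*(-_{A}N)|}\,y^{|O(-_{A}M)|},
\]
where $A$ denotes any chosen representative of the class. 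Substituting $x\leftarrow x+u$ and $y\leftarrow y+v$ yields
\[
t(M,N;x+u,y+v,1)\ =\ \sum_{\text{classes}}(x+u)^{|O^*(-_{A}N)|}(y+v)^{|O(-_{A}M)|}.
\]

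Second, I rewrite the right-hand side of the theorem by grouping the sum over $A'\subseteq E$ according to the activity class of $A'$. For each class I pick the unique active-fixed and dual-active-fixed representative $A$ supplied by Observation~\ref{obs:unique-active-fixed}. By the explicit description of $cl_{M\to N}(A)$ given just before the theorem, every $A'$ in this class is obtained from $A$ by reorienting a subset of parts indexed by some $P\subseteq O^*(-_AN)$ and some $Q\subseteq O(-_AM)$, and satisfies
\[
\Theta^*_N(A')=O^*(-_AN)\setminus P,\ \bar\Theta^*_N(A')=P,\ \Theta_M(A')=O(-_AM)\setminus Q,\ \bar\Theta_M(A')=Q.
\]
Summing $x^{|\Theta^*_N(A')|}u^{|\bar\Theta^*_N(A')|}y^{|\Theta_M(A')|}v^{|\bar\Theta_M(A')|}$ over $P$ and $Q$ independently gives
\[
\sum_{A'\in cl_{M\to N}(A)}x^{|\Theta^*_N(A')|}u^{|\bar\Theta^*_N(A')|}y^{|\Theta_M(A')|}v^{|\bar\Theta_M(A')|}\ =\ (x+u)^{|O^*(-_AN)|}(y+v)^{|O(-_AM)|}
\]
by the binomial theorem. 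Summing over all classes matches the previous display, which proves the theorem.

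The argument has no serious obstacle; the only point requiring care is the bookkeeping between the global sum over $A'\subseteq E$ and the sum over classes with representatives $A$, and in particular the remark in Definition~\ref{def:act-classes} that the hybrid part is deliberately not reoriented inside a class (so the class size is exactly $2^{|O(-_AM)|+|O^*(-_AN)|}$, which is what allows the factor $2^{|O^*(-_AN)|+|O(-_AM)|}$ to cancel the denominators in the Las Vergnas formula and produce a clean $x^{|O^*|}y^{|O|}$ term per class). Once this is checked, the comparison of the two resulting expressions is immediate.
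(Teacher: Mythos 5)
Your proposal is correct and follows essentially the same route as the paper's own proof: both rest on the partition of $2^E$ into activity classes, the boolean-lattice parametrisation of each class via its active-fixed and dual-active-fixed representative, the binomial theorem to collapse each class to $(x+u)^{|O^*(-_AN)|}(y+v)^{|O(-_AM)|}$, and the cancellation of the class size $2^{|O(-_AM)|+|O^*(-_AN)|}$ against the halved variables in the Las Vergnas orientation-activity formula. The only (cosmetic) difference is that you transform both sides to a common sum over classes, whereas the paper rewrites the right-hand side step by step until it reaches $t(M,N;x+u,y+v,1)$.
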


\begin{proof}

Let us denote  
$[Exp] = \sum_{A\subseteq E}\ x^{|\Theta^*_\N(A)|}\ u^{|\bar\Theta^*_\N(A)|}\ y^{|\Theta_\M(A)|}\ v^{|\bar\Theta_\M(A)|}.$
Since $2^E$ is partitioned into activity classes 
by Definition \ref{def:act-classes},
and denoting $cl(A)$ for short instead of $cl_{M\ra N}(A)$, we have:
\vspace{-2mm}
\begin{eqnarray*}
[Exp] &= & \sum_{\substack{cl(A)\\ 
\text{one $A$ for each class}
}} \ \ \sum_{A'\in cl(A)}  \ x^{|\Theta^*_\N(A')|}\ u^{|\bar\Theta^*_\N(A')|}\ y^{|\Theta_\M(A')|}\ v^{|\bar\Theta_\M(A')|}
\end{eqnarray*}
\vspace{-3mm}

As discussed above, one can choose a representative $A$ of $cl(A)$ such that $O(-_AM)\cap A=\emptyset$ and $O^*(-_AN)\cap A=\emptyset$, 
and use the 
structure of $cl(A)$, to reformulate the expression as follows:
\futur{detailler preuve comme en commentaire dessous?}%
%
%
\vspace{-2mm}
\begin{eqnarray*}
%
[Exp] & = & \sum_{\substack{cl(A)\\ 
\text{one $A$ for each class}
}} \ \ \sum_{\substack{P\subseteq O^*(-_AN)\\Q\subseteq O(-_AM)}} x^{\mid O^*(-_AN)\s P\mid}u^{\mid P\mid}y^{\mid O(-_AM)\s Q\mid}v^{\mid Q\mid}\\
\end{eqnarray*}

\vspace{-3mm}
Now we can simply rewrite this expression as follows and use the binomial formula: 
\vspace{-2mm}
\begin{eqnarray*}
[Exp]&=&  
\sum_{\substack{cl(A)\\ 
\text{one $A$ for each class}
}}
\Biggl(\sum_{P\subseteq O^*(-_AN)} x^{\mid O^*(-_AN)\s P\mid}u^{\mid P\mid}\Biggr)
\Biggl(\sum_{Q\subseteq O(-_AM)} y^{\mid O(-_AM)\s Q\mid}v^{\mid Q\mid}\Biggr)\\
&=&  
\sum_{\substack{cl(A)\\ 
\text{one $A$ for each class}
}}
(x+u)^{\mid O^*(-_AN)\mid}\ (y+v)^{\mid O(-_AM)\mid}\\
\end{eqnarray*}

\vspace{-3mm}

For a better readability, let us denote $o(-_AM)$ for $|O(-_AM)|$, and $o^*(-_AM)$ for $|O^*(-_AM)|$.
Since the activity class $cl(A)$ has ${2^{o(-_AM)+o^*(-_AN)}}$ elements with the same activities, 
we have:
\vspace{-2mm}
\begin{eqnarray*}
[Exp] & = & 
%
\sum_{\substack{cl(A)\\ 
\text{one $A$ for each class}
}}
{1\over{2^{o^*(-_AN)+o(-_AM)}}} \sum_{A'\in cl(A)} (x+u)^{o^*(-_{A'}N)}\ (y+v)^{o(-_{A'}M)}\\
&=& 
\sum_{\substack{cl(A)\\ 
\text{one $A$ for each class}
}}
  \sum_{A'\in cl(A)} {\Bigl({x+u\over 2}\Bigr)}^{o^*(-_{A'}N)}\ {\Bigl({y+v\over 2}\Bigr)}^{o(-_{A'}M)}\\
&=&  \sum_{A\subseteq E} \ {\Bigl({x+u\over 2}\Bigr)}^{o^*(-_{A}N)}\ {\Bigl({y+v\over 2}\Bigr)}^{o(-_{A}M)}\\
&=&  t(M,N;x+u,y+v,1)
\end{eqnarray*}
by the formula in terms of orientation activities from \cite{LV84} recalled earlier.
\end{proof}


Numerous Tutte polynomial formulas, for oriented matroid perspectives or for oriented matroids as well, can be directly obtained from Theorem \ref{th:expansion-reorientations}, for instance by replacing variables ($x$, $u$, $y$, $v$) with $(x/2,x/2,y/2,y/2)$, 
or $(x+1,-1,y+1,-1)$, 
or $(2,0,0,0)$...
Let us give examples of such formulas in the corollary below. 
%
These formulas, and a detailed example for this set of formulas, are given  \cite{LV12}.
Actually, a total of 25 expansions, 9 different up to reordering, could be
thus obtained (in a similar way than in \cite{GoTr90, LV13} for formulas from subset activities). \emevder{A VERIFIER ?!?}



\begin{cor}[\cite{LV12}] 
Let us denote $\theta_\M(A)$ instead of $|\Theta_\M(A)|$, etc.
Let $p,q$ be two non-negative integers. 
Let $M\ra N$ be an oriented matroid perspective on a linearly ordered set $E$.
%
%
We have (among possible variants as discussed above):
\begin{large}
\begin{align*}
t(M,N;x,y,1)&&=&& \sum_{A\subseteq E}\ (x-1)^{\theta^*_{\N}(A)}(y-1)^{\theta_\M(A)} 
 &&=&& \sum_{\substack{A\subseteq E\\ \bar\theta^*_\N(A)=0\\ \bar\theta_\M(A)=0}}\ x^{\theta^*_{\N}(A)}y^{\theta_\M(A)} \\
t(M,N;0,0,1)&&=&&\sum_{A\subseteq E} (-1)^{\theta^*_{N}(A)+\theta_{\M}(A)}\\
{\partial^{p+q}t\over \partial x^p\partial y^q}(M,N;x,y,1)&&=&&\ p!q!\ 
\sum_{\substack{A\subseteq E\\ \bar\theta^*_{N}(A)=p\\ \bar\theta_M(A)=q}}\ x^{\theta^*_{N}(A)}y^{\theta_M(A)}.
\end{align*}
\end{large}

\vspace{-3mm}
\noindent Let $M$ be an oriented matroid on a linearly ordered set $E$. We have:
%
\vspace{-1mm}
\begin{large}
\begin{align*}
t(M;x+u,y+v)&&=&&\sum_{A\subseteq E} x^{\theta^*_M(A)}u^{\bar\theta^*_M(A)}y^{\theta_M(A)}v^{\bar\theta_M(A)}\\
t(M;x,y)&&=&& \sum_{A\subseteq E}\ (x-1)^{\theta^*_{\M}(A)}(y-1)^{\theta_\M(A)} 
 &&=&& \sum_{\substack{A\subseteq E\\ \bar\theta^*_\M(A)=0\\ \bar\theta_\M(A)=0}}\ x^{\theta^*_{\M}(A)}y^{\theta_\M(A)} \\
t(M;2,0)&&=&&\sum_{\substack{A\subseteq E\\ \bar\theta^*_\M(A)=\theta_\M(A)=\bar\theta_\M(A)=0}}2^{\theta^*_\M(A)}
&&=&&\sum_{\substack{A\subseteq E\\ \theta^*_\M(A)=\theta_\M(A)=\bar\theta_\M(A)=0}}2^{\bar\theta^*_\M(A)} \\
{\partial^{p+q}\over \partial x^p\partial y^q}\ t(M;x,y)&&=&& \ p!q!\ 
\sum_{\substack{A\subseteq E\\ \bar\theta^*_{M}(A)=p\\ \bar\theta_M(A)=q}}\ x^{\theta^*_{M}(A)}y^{\theta_M(A)}.  && &&
\end{align*}
\end{large}
\end{cor}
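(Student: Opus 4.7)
The plan is to derive every formula of the corollary from Theorem~\ref{th:expansion-reorientations} by specialising variables and, for the derivative identities, by differentiating before specialising. So nothing substantial beyond the main theorem is needed; the work is in choosing the right substitution for each identity.

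For the oriented matroid perspective formulas, I start from the four variable expansion of Theorem~\ref{th:expansion-reorientations}. The second form of $t(M,N;x,y,1)$ comes from the substitution $(x,u,y,v)\mapsto(x,0,y,0)$, since the factors $0^{\bar\theta^*_N(A)}$ and $0^{\bar\theta_M(A)}$ kill every term with $\bar\theta^*_N(A)\neq 0$ or $\bar\theta_M(A)\neq 0$. The first form follows from $(x,u,y,v)\mapsto(x-1,1,y-1,1)$, which leaves $x+u=x$, $y+v=y$ on the left and reduces the contribution of the vanishing parameters to~$1$. Setting $x=y=0$ in this first form yields the evaluation $t(M,N;0,0,1)=\sum_{A\subseteq E}(-1)^{\theta^*_N(A)+\theta_M(A)}$.

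For the derivative identity, I apply $\partial^{p+q}/\partial u^p\partial v^q$ to both sides of Theorem~\ref{th:expansion-reorientations} and then set $u=v=0$. On the left-hand side this produces $(\partial^{p+q}t/\partial x^p\partial y^q)(M,N;x,y,1)$. On the right-hand side, $p$-fold differentiation of $u^{\bar\theta^*_N(A)}$ gives a factor that evaluates at $u=0$ to $p!$ when $\bar\theta^*_N(A)=p$ and to $0$ otherwise; the same analysis applies to $v$ and $q$. Collecting the surviving terms yields the claimed identity.

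For the single oriented matroid case, note that when $M=N$ the exponent $r(M)-r(N)-(r_M(A)-r_N(A))$ of $z$ in the defining sum of $t(M,N;x,y,z)$ vanishes for every $A$, so $t(M,M;x,y,z)=t(M;x,y)$. The first three matroid formulas (including the analogue of Theorem~\ref{th:expansion-reorientations} and its derivative version) follow by specialising $N=M$ in the perspective formulas just established. The two expressions for $t(M;2,0)$ come respectively from the substitutions $(x,u,y,v)\mapsto(2,0,0,0)$ and $(x,u,y,v)\mapsto(0,2,0,0)$ in the matroid version of Theorem~\ref{th:expansion-reorientations}, where in each case the $0$-valued variables force the vanishing of their associated exponents on every surviving~$A$. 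The only obstacle here is bookkeeping: keeping track of which parameters are forced to zero by each specialisation, since Theorem~\ref{th:expansion-reorientations} already packages all the structural content.
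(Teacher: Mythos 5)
Your proposal is correct and follows essentially the same route as the paper, whose proof is simply ``Direct by Theorem~\ref{th:expansion-reorientations}, use Taylor's theorem for the derivative expressions'': each identity is obtained by a variable specialisation of the four-variable expansion, and your differentiate-then-set-$u=v=0$ argument is exactly the Taylor-coefficient computation the paper alludes to. All the substitutions you list check out, including the two for $t(M;2,0)$ and the reduction to the single-matroid case via $M=N$.
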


\begin{proof}
Direct by Theorem \ref{th:expansion-reorientations},
use Taylor's theorem for the derivative expressions. 
\emevder{XXXX verifier les expressions !!! XXXX}
\end{proof}

\begin{observation}
We have $\bar\theta^*_\N(A)=p$ and $\bar\theta_\M(A)=q$ if  $q$ active elements of $-_AM$  and $p$  dual-active elements of $-_AN$  are reoriented in $-_AM\ra-_AN$ w.r.t. $M\ra N$ (or equally  w.r.t. the unique reorientation of $M\ra N$ in the same activity class as $-_AM\ra-_AN$
which is active-fixed  and dual-active-fixed).
\emevder{A REVERFIEIR ! je crois avoir bien verifie, mais aussi je crains de m'emboruiller... peut-etre quec ete pbservation evidente alourdit inutilement ? ou bien clarifie l'usage de ces paramettres numeriques ? A TRANCHER !}%
\end{observation}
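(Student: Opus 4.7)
The plan is to verify the two formulations of the observation, which essentially unwind Definition~\ref{def:gene-act-ori} once Proposition~\ref{thm:act-classes} and Observation~\ref{obs:unique-active-fixed} are brought to bear. The principal formulation is almost a tautology: the reorientation $-_A\M \ra -_A\N$ is obtained from $\M \ra \N$ by flipping precisely the elements of $A$, hence the active elements of $-_A\M$ that get reoriented with respect to $\M$ form exactly $O(-_A\M) \cap A = \bar\Theta_\M(A)$, of cardinality $q = \bar\theta_\M(A)$; dually the dual-active elements of $-_A\N$ that get reoriented with respect to $\N$ are $O^*(-_A\N) \cap A = \bar\Theta^*_\N(A)$, of cardinality $p = \bar\theta^*_\N(A)$.

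For the parenthetical formulation, let $A_0 \subseteq E$ be the unique subset in $cl_{\M\ra \N}(A)$ such that $-_{A_0}\M \ra -_{A_0}\N$ is both active-fixed and dual-active-fixed (Observation~\ref{obs:unique-active-fixed}). Since $A$ and $A_0$ index the same activity class, Proposition~\ref{thm:act-classes} yields
$$O(-_A\M) \;=\; O(-_{A_0}\M) \qquad \text{and} \qquad O^*(-_A\N) \;=\; O^*(-_{A_0}\N).$$
The active- and dual-active-fixedness of $A_0$ then read $O(-_{A_0}\M) \cap A_0 = \emptyset$ and $O^*(-_{A_0}\N) \cap A_0 = \emptyset$, so in combination
$$O(-_A\M) \cap A_0 \;=\; \emptyset, \qquad O^*(-_A\N) \cap A_0 \;=\; \emptyset.$$

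Now the reorientation $-_A\M \ra -_A\N$ is obtained from $-_{A_0}\M \ra -_{A_0}\N$ by flipping precisely the symmetric difference $A \triangle A_0$. Using distributivity of intersection over symmetric difference and the vanishing intersections above,
$$O(-_A\M) \cap (A \triangle A_0) \;=\; \bigl(O(-_A\M) \cap A\bigr) \,\triangle\, \bigl(O(-_A\M) \cap A_0\bigr) \;=\; O(-_A\M) \cap A \;=\; \bar\Theta_\M(A),$$
and symmetrically $O^*(-_A\N) \cap (A \triangle A_0) = \bar\Theta^*_\N(A)$. Hence the counts of active and dual-active elements of $-_A\M \ra -_A\N$ that are reoriented with respect to the active- and dual-active-fixed representative of the class are again $q$ and $p$, matching the count taken with respect to $\M \ra \N$.

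There is no real obstacle here: the argument is pure bookkeeping via the symmetric-difference identity, and the only non-trivial input is Proposition~\ref{thm:act-classes}, which guarantees that the sets $O(-_A\M)$ and $O^*(-_A\N)$ do not change across an activity class. The value of writing the proof is to make explicit the interpretation of $\bar\theta_\M$ and $\bar\theta^*_\N$ as ``number of active, resp. dual-active, elements flipped'', which is the geometric content of the parameters appearing in Theorem~\ref{th:expansion-reorientations}.
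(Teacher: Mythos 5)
Your proof is correct and takes essentially the same route as the paper: the paper leaves this observation unproven, treating it as immediate from Definition~\ref{def:gene-act-ori} together with the $(P,Q)$-parametrization of activity classes displayed just before it, and your argument (invariance of $O(-_A\M)$ and $O^*(-_A\N)$ across a class via Proposition~\ref{thm:act-classes}, plus the active-fixed and dual-active-fixed representative of Observation~\ref{obs:unique-active-fixed}) spells out exactly that bookkeeping. Your symmetric-difference identity is just an explicit way of recording what the paper's relations $\bar\Theta_\M(A')=Q$ and $\bar\Theta^*_\N(A')=P$ already encode.
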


Let us end with a proposition summing up how the Tutte polynomial coefficients and evaluations count sets of reorientations based on activity classes and their representatives. It is direct from the previous definitions and results.
Let us mention that these reorientations are addressed in oriented matroids and bijectively related to certain subsets in terms of subset activities in \cite{ABG2, AB2-b}.

\begin{prop}
\label{prop:enum-classes}
Let $M\ra N$ be an oriented matroid perspective on a linearly ordered set $E$.
Let us denote $t_{i,j}$  the coefficient of $x^iy^j$ in $t(M,N;x,y,1)$.

Then $t_{i,j}$ equals the number of activity classes of reorientations of $M\ra N$ with $i$ dual-active elements and $j$ active elements.
Equally (by Observation \ref{obs:unique-active-fixed}), $t_{i,j}$ equals the number of  reorientations of $M\ra N$,  with $i$ dual-active elements and $j$ active elements, 
which are active-fixed (w.r.t. $M$) and dual-active fixed (w.r.t. $N$).

Furthermore, by combining these counting results, certain remarkable subsets of reorientations are enumerated by Tutte polynomial evaluations as shown in Table \ref{table}. 
\emevder{faut-il ajoutyer precision du genre: Omitting when (dual) active fixed elements are involved amounts to count the coresponding activity classes with the same evaluation. mal dit? integrer dans table? autre table? a prouver ? +++ verifier les (2,1) !}%
In particular, $t(M,N;1,1,1)$ counts the total number of activity classes of reorientations.
\qed
\end{prop}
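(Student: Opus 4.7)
The plan is to derive Proposition~\ref{prop:enum-classes} as a direct specialization of Theorem~\ref{th:expansion-reorientations}, combined with Observation~\ref{obs:unique-active-fixed} and Proposition~\ref{thm:act-classes}. No deletion/contraction or new structural argument is needed; the proposition is essentially a reformulation.

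First, I would set $u = v = 0$ in Theorem~\ref{th:expansion-reorientations}. Under the convention $0^0 = 1$, the right-hand side collapses to
$$t(M,N;x,y,1) \ =\ \sum_{\substack{A\subseteq E\\ \bar\Theta^*_N(A) = \emptyset\\ \bar\Theta_M(A) = \emptyset}} x^{|\Theta^*_N(A)|}\, y^{|\Theta_M(A)|},$$
where by Definition~\ref{def:active-fixed} the summation runs exactly over those reorientations $-_AM \ra -_AN$ that are both active-fixed and dual-active-fixed. Extracting the coefficient of $x^i y^j$ yields the second characterization of $t_{i,j}$ claimed in the proposition.

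Next, I would invoke Observation~\ref{obs:unique-active-fixed}, which guarantees that each activity class of reorientations contains exactly one member that is both active-fixed and dual-active-fixed. Moreover, Proposition~\ref{thm:act-classes} implies that all reorientations in a given activity class share the same sets $O(-_AM)$ and $O^*(-_AN)$, hence the same values $|\Theta^*_N| + |\bar\Theta^*_N| = |O^*(-_AN)| = i$ and $|\Theta_M| + |\bar\Theta_M| = |O(-_AM)| = j$ for the unique active-fixed and dual-active-fixed representative. Therefore counting such representatives with parameters $(i,j)$ is the same as counting activity classes with $i$ dual-active and $j$ active elements, which yields the first characterization of $t_{i,j}$.

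Finally, the evaluations of Table~\ref{table} follow by specialization. Setting $(x,y) = (1,1)$ in $t(M,N;x,y,1)$ sums all $t_{i,j}$, giving the total number of activity classes; other entries correspond to restrictions or weighted sums (possibly via a factor $2^{i+j}$ coming from reorienting cyclic and acyclic parts within a class, comparison being made with the orientation-activity formula from \cite{LV84} recalled in Section~\ref{sec:orientations}), and each is read off directly from the first two statements of the proposition. I do not foresee any real obstacle; the only points requiring care are the $0^0 = 1$ convention when setting $u = v = 0$, and the distinction between counting activity classes and counting individual reorientations when the factor $2^{|O(-_AM)|+|O^*(-_AN)|}$ enters.
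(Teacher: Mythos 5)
Your proposal is correct and matches the paper's intended argument: the paper gives no explicit proof (it states the result is direct from the preceding material), and the specialization $u=v=0$ of Theorem~\ref{th:expansion-reorientations} you use is exactly the formula already recorded in the corollary following that theorem, combined with Observation~\ref{obs:unique-active-fixed} and the boolean-lattice structure of the classes for the table entries. No gaps.
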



\begin{table}[H]
\label{table}%
\def\interligne{&\\[-11pt]}%
\vspace{-5mm}
\begin{center}
\begin{tabular}{|l|c|}
\hline
\interligne
\multicolumn{1}{|l}{ 
\begin{tabular}{@{}l@{}}
{\bf reorientations $-_AM\ra -_AN$ with} \\
{\hspace{5mm}\it\small (resp. activity classes with)} 
\end{tabular}  
}
& \multicolumn{1}{|c|}{\bf are counted by}\\
\hline
\interligne
no condition & $t(M,N;2,2,1)$\\
\interligne
acyclic $-_AM$ & $t(M,N;2,0,1)$\\
\interligne
totally cyclic  $-_AN$ & $t(M,N;0,2,1)$\\
 \interligne
 \begin{tabular}{@{}l@{}}
 acyclic $-_AM$ and  dual-active-fixed $-_AN$\\
\hspace{5mm}{\it\small (resp.  acyclic $-_AM$)}
\end{tabular} 
& $t(M,N;1,0,1)$\\
 \interligne
  \begin{tabular}{@{}l@{}}
 active-fixed  $-_AM$  and totally cyclic  $-_AN$ \\
\hspace{5mm}{\it\small (resp. totally cyclic $-_AN$)}
\end{tabular} 
& $t(M,N;0,1,1)$\\
\interligne
 active-fixed $-_AM$ & $t(M,N;2,1,1)$\\
\interligne
  dual-active-fixed $-_AN$ & $t(M,N;1,2,1)$\\
\interligne
  \begin{tabular}{@{}l@{}}
 active-fixed $-_AM$ and   dual-active-fixed $-_AN$\\
\hspace{5mm}{\it\small (resp.  no condition)}
\end{tabular} 
&  $t(M,N;1,1,1)$\\
\interligne
  \begin{tabular}{@{}l@{}}
 acyclic $-_AM$ and  totally cyclic $-_AN$ \\
\hspace{5mm}{\it\small (resp.  acyclic $-_AM$ and  totally cyclic $-_AN$)}
\end{tabular} 
&  $t(M,N;0,0,1)$\\
\hline
\end{tabular}  
\end{center}
\vspace{-5mm}
\caption{Enumeration of certain reorientations based on  representatives of activity classes (Proposition \ref{prop:enum-classes}).
 }
\end{table}


\section{Building on subset activities in matroid perspectives}
\label{sec:subsets}



As explained in the introduction, this section  sums up and completes the construction from \cite{LV13} of the Tutte polynomial of a matroid perspective in terms of subset activities. 
We highlight the close relation between a structural partition and an enumerative Tutte polynomial expansion. Actually, this relation is remarkable in matroids as well.
The reader is referred to \cite{Ox92} for background on matroids, and more specifically to
\cite{Ku86} or \cite[Section 7.3]{Ox92} on perspectives of matroids.
%
Let $M$ and $N$ be two matroids on the set $E$.
The ordered pair $(M,N)$ constitutes a \emph{(matroid) perspective} (also called \emph{quotient map}, or \emph{strong map} up to unimportant variants), and is denoted $M\rightarrow N$, if the following equivalent properties are satisfied:
\begin{itemize}
\itemsep=0mm
\partopsep=0mm 
\topsep=0mm 
\parsep=0mm
\item $r_{N}(X)-r_{N}(Y)\leq r_M(X)-r_M(Y)$ for all $Y\subseteq X\subseteq E$;
\item every flat of $N$ is a flat of $M$;
\item every cocircuit of $N$ is a union of cocircuits of $M$;
\item every circuit of $M$ is a union of circuits of $N$;
\item no circuit of $M$ and cocircuit of $N$ intersect in exactly one element%
\footnote{%
This characterization is mentioned in \cite{LV13}, but not in the usual references on matroid theory \cite{Ku86,Ox92}. We have not found a proof in the literature. Let us give one here.
Assume a circuit $C$ of $M$ and a cocircuit $D$ of $N$ satisfy $C\cap D=\{e\}$. Then, $E\s D$ is a flat of $N$ which is not a flat of $M$. Indeed, $E\s D$ is a flat of $N$ as the complement of a cocircuit, and $E\s D$ is not a flat of $M$, otherwise $C\s\{e\}\subseteq E\s D$ would imply $e\in E\s D$. 
Conversely, assume $F$ is a flat of $N$ which is not a flat of $M$. Then, there exists a  circuit $C$ of $M$ and $e\in C$ such that $e\not\in F$ and $C\s\{e\}\subseteq F$. Since $F$ is a flat of $N$, let us consider a basis $B$ of $N$ formed by a basis of $N(F)$, plus $\{e\}$, plus some elements of $E$. Then, the closure of $B\s\{e\}$ in $N$ contains $F$, and its complement is a cocircuit $D$ of $N$. By construction we have $C\s \{e\}\subseteq F\subseteq E\s D$ and $e\in D$, hence finally $C\cap D=\{e\}$.
}%
;
\item there is a matroid $N$ on $E\uplus A$ such that $M=N\s A$ and $N=N/A$. 
\end{itemize}

Observe that, for a matroid $M$,  $M\ra M$ is a perspective, meaning that every result given for perspectives is also true for matroids by setting $M=N$.
%
Actually, if $M\ra N$ is a matroid perspective and $r(M)=r(N)$ then $M=N$. 
Let us observe that 
if $M\ra N$ is a matroid perspective then $N^*\ra M^*$ is also a matroid perspective.
\bs

\emevder{RK: comme TP deja defini en terms de rang dans intro, ai decide de ne pas le repeter ici... mis en commentaire dans source si besoin}
%

Let $M$ be a matroid on a linearly ordered set $E$. 
For $A\subseteq E$, we define: 
\vspace{-0.2cm}
\begin{eqnarray*}
\Int_{M}(A)& =& \bigl\{e\in A\mid e=\min(C),\ C \text{ cocircuit contained in } (E\s A)\cup\{e\} \bigr\}, \\
\Ext_M(A) & =& \bigl\{e\in E\s A\mid e=\min(C),\ C \text{ circuit contained in } A\cup\{e\} \bigr\}. 
\end{eqnarray*}
\vspace{-5mm}

\noindent Then the parameters $| \Int_{M}(A) |$ and $| \Ext_{M}(A) |$ are respectively called the \emph{internal activity} and \emph{external activity}  of $A$ in $M$.
%
%
%
%
%
%
%
%
If $A$ is a basis of $M$, then the above definitions of $\Int_M(A)$, $\io_M(A)$, $\Ext_M(A)$, and $\ep_M(A)$ are the same as the usual ones for basis activities.
Extending the usual notion of activities from matroid basis to subsets 
%
enables the following expression of the Tutte polynomial of a matroid perspective in terms of independent/spanning subset activities from \cite{LV80, LV99}, which extends the usual one of a matroid in terms of basis activities
(the initial definition  of the Tutte polynomial of a matroid perspective in terms of rank functions is recalled in Section \ref{sec:intro}).


\bs

Let $M\ra N$ be a matroid perspective on a linearly ordered set $E$. 
By \cite{LV80,LV99}, 
its Tutte polynomial  satisfies

$$t(M,N;x,y,z)=\sum_{\substack{B\subseteq E\\ B\text{ independant in }M\\ \text{ and spanning in } N}}\ x^{\mid \Int_N(B)\mid}\ y^{\mid \Ext_M(B)\mid}\ z^{r(M)-r(N)-\bigl(r_M(B)-r_{N}(B)\bigr)}.$$

Beyond this, subset activities 
yield the following remarkable partition into boolean intervals of subsets, 
as shown in 
\cite{LV13}
by applying the general construction of \cite{Da81}.
%
%
%
We have
$$2^E=\biguplus_{\substack{B\subseteq E\\ B\text{ independent in }M\\ \text{ and spanning in } N}}\Bigl[\ B\setminus \Int_{N}(B),\ B\cup \Ext_M(B)\ \Bigr].$$

%

Now, let $M$ be a matroid on a linearly ordered set $E$, for $A\subseteq E$, we define: 
\vspace{-0.2cm}
\begin{eqnarray*}
P_{M}(A) & =& \bigl\{e\in E\s A\mid e=\min(C),\ C \text{ cocircuit contained in } E\s A \bigr\}, \\
Q_M(A) & =& \bigl\{e\in A\mid e=\min(C),\ C \text{ circuit contained in } A \bigr\}. 
\end{eqnarray*}
The cardinalities of these two  sets turn out to be well-known, and independent of the ordering of~$E$: 
\begin{eqnarray*}
|P_M(A)|&=&r(M)-r_M(A),\\
|Q_M(A)|&=&|A|-r_M(A). 
\end{eqnarray*}
%



%

\noindent Finally, subset activities can be seen as parameters that situate the position of a subset inside the boolean interval which contains it with respect to the above partition.
%
Precisely, let $B\subseteq E$ be independent in $M$ and spanning in $N$, and let $A$ be in the interval $[B\s \Int_{N}(B), B\cup \Ext_M(B)]$. 
We have the following relations, showing the consistency of the whole above setting:
\begin{eqnarray*}
\Int_{N}(A)& =& \Int_{N}(B)\cap A, \\
P_{N}(A) & =& \Int_{N}(B)\s A, \\
\Ext_M(A) & =& \Ext_M(B)\s A, \\
Q_M(A) & =& \Ext_M(B)\cap A. 
\end{eqnarray*}


More properties of the above partition into intervals, and, more specifically, of the relations between an independent/spanning subset and the subsets in its associated interval, are listed in \cite[Prop. 4.4]{LV13}.
However, one non-trivial property is missing in this list and never mentioned in \cite{LV13}, whereas it is required  
to prove the main theorem%
\footnote{Precisely: \cite[Lemma 4.6]{LV13} actually follows from \cite[Prop. 4.4]{LV13} as written in \cite{LV13} but only once the  argument of Proposition  \ref{lem:rcd} is established, ensuring that, in the statement of \cite[Lemma 4.6]{LV13}, we have  $rcd_{M,M'}(A)=rcd_{M,M'}(B)$, so that the parameter $k$ makes sense  and is correctly handled, 
.}%
. 
The next proposition fills this gap,
and it 
is also required  in our proof of the main theorem below.

\begin{prop}
\label{lem:rcd}
Let $M\ra N$ be a matroid perspective on a linearly ordered set $E$. Let $B\subseteq E$ be independent in $M$  and spanning in $N$, and let $A\in\bigl[\ B\setminus \Int_{N}(B),\ B\cup \Ext_M(B)\ \bigr]$.
Then we have $$r_M(A)-r_N(A)=r_M(B)-r_N(B).$$
\end{prop}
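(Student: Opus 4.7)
The plan is to decompose $A$ according to its position in the interval $[B\setminus\Int_N(B),\, B\cup\Ext_M(B)]$ and compute $r_M(A)$ and $r_N(A)$ separately. Writing $I=B\setminus A\subseteq\Int_N(B)$ and $X=A\setminus B\subseteq\Ext_M(B)$, we have $A=(B\setminus I)\cup X$. Because $B$ is independent in $M$ and spanning in $N$, $r_M(B)=|B|$ and $r_N(B)=r(N)$; it therefore suffices to show $r_M(A)=|B|-|I|$ and $r_N(A)=r(N)-|I|$, whence the claimed equality follows by subtraction.

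For each $x\in X$, fix a witnessing $M$-circuit $C_x\subseteq B\cup\{x\}$ with $\min(C_x)=x$, and for each $i\in I$, fix a witnessing $N$-cocircuit $D_i\subseteq(E\setminus B)\cup\{i\}$ with $\min(D_i)=i$. Using the perspective property that every cocircuit of $N$ is a union of cocircuits of $M$, pick an $M$-cocircuit $D_i'\subseteq D_i$ containing $i$. Then $C_x\cap D_i'\subseteq(B\cup\{x\})\cap((E\setminus B)\cup\{i\})=\{i,x\}$, so $M$-orthogonality forces this intersection to have cardinality $0$ or $2$. The case $\{i,x\}$ is impossible: it would give $i\in C_x$, hence $x=\min(C_x)<i$, and simultaneously $x\in D_i'$, hence $i=\min(D_i)\leq\min(D_i')\leq x$, a contradiction. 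So $i\notin C_x$ for every $i\in I$, giving $C_x\subseteq(B\setminus I)\cup\{x\}$ and hence placing $x$ in the $M$-closure of $B\setminus I$. Since $B\setminus I\subseteq B$ is $M$-independent, $r_M(A)=r_M(B\setminus I)=|B|-|I|$.

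For $r_N(A)$, the same $C_x$ is, by the perspective property that every $M$-circuit is a union of $N$-circuits, a union of $N$-circuits, so some $N$-circuit $C_x'\subseteq C_x\subseteq(B\setminus I)\cup\{x\}$ contains $x$, placing $x$ also in the $N$-closure of $B\setminus I$ and yielding $r_N(A)=r_N(B\setminus I)$. To see $r_N(B\setminus I)=r(N)-|I|$, enumerate $I=\{i_1,\dots,i_k\}$ and add the $i_j$ back one at a time: at step $j$, $D_{i_j}$ meets $(B\setminus I)\cup\{i_1,\dots,i_j\}$ in exactly $\{i_j\}$ (because $D_{i_j}\cap B=\{i_j\}$), so $N$-orthogonality forbids any $N$-circuit in this set from containing $i_j$, forcing a strict rank jump at each step. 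Iteration gives $r_N(B)=r_N(B\setminus I)+|I|$, hence $r_N(A)=r(N)-|I|$, and finally $r_M(A)-r_N(A)=|B|-r(N)=r_M(B)-r_N(B)$. The subtle point, and the main obstacle, is the orthogonality-with-minimality argument of the second paragraph: one must pass from the $N$-cocircuit $D_i$ to an $M$-cocircuit $D_i'\subseteq D_i$ (using the perspective) so that $M$-orthogonality can be applied to $C_x$, and then simultaneously exploit the minimality constraints $\min(C_x)=x$ and $\min(D_i)\leq\min(D_i')$ to eliminate the forbidden configuration.
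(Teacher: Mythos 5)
Your proof is correct and follows essentially the same route as the paper's: the same decomposition $A=(B\setminus I)\cup X$, the same witnessing circuits $C_x$ and cocircuits $D_i$, and the same clash between the two minimality conditions ($x=\min(C_x)$ versus $i=\min(D_i)$) to rule out a circuit--cocircuit intersection of size two. The only cosmetic differences are that the paper invokes the perspective characterization ``no circuit of $M$ and cocircuit of $N$ meet in exactly one element'' directly on $C_x$ and $D_i$ (obtaining both $C_x\cap I=\emptyset$ and $D_i\cap X=\emptyset$ in one stroke, rather than descending to an $M$-cocircuit $D_i'\subseteq D_i$), and that it computes $r_N(A)$ via the intersection of the hyperplanes $E\setminus D_i$ rather than by your one-element-at-a-time rank count.
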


\begin{proof}
We have $A=B\s P \cup Q$ with $P\subseteq \Int_{N}(B)\subseteq B$ and $Q\subseteq \Ext_M(B)\subseteq E\s B$.
First, let us consider any $e\in Q$ and $f\in P$.
By definition of $\Ext_M(B)$, there exists a circuit $C$ of $M$ with $C\s \{e\}\subseteq B$ and $e=\min(C)$.
By definition of $\Int_{N}(B)$, there exists a cocircuit $D$ of $N$ with $D\s\{f\}\subseteq E\s B$ and $f=\min(D)$.
By construction, we have $C\cap D\subseteq \{e,f\}$.
Let us prove that $C\cap D=\emptyset$.
Otherwise, by properties of a matroid perspective (no circuit of $M$ and cocircuit of $N$ intersect in exactly one element), we must have $e\not=f$ and $C\cap D=\{e,f\}$ .
In this case, we have $e\leq f$ by properties of $e$, and $f\leq e$ by properties of $f$, hence $e=f$, a contradiction.
So $C\cap D=\emptyset$.
This conclusion is true for any choice of $e$ and $f$, so we have in addition $C\cap P=\emptyset$ and $D\cap Q=\emptyset$.

Now, let us place in $M$.
Since $B$ is independent in $M$, we have $r_M(B)=|B|$.
Moreover, we have that $B\s P$ is independent in $M$. 
For $e\in Q$ and a circuit $C$ of $M$ as above, 
we have shown that $C\cap P=\emptyset$, hence $C\s\{e\}\subseteq B\s P$, hence $e$ belongs to the closure of $B\s P$ in $M$. This is true for any $e\in Q$, so $A=B\s P \cup Q$ is contained in the closure of $B\s P$ in $M$.
So we have $r_M(A)=r_M(B\s P)$. Since $P\subseteq B$ and $B\s P$ is independent, we finally have $r_M(A)=|B|-|P|$ .

Now, let us place in $N$.
Since $B$ is spanning in $N$, we have $r_N(B)=r(N)$.
For $f\in P$ and a cocircuit $D$ of $N$ as above,
we have $D\s\{f\}\subseteq E\s B$, hence $B\s \{f\}$ spans the hyperplane $E\s D$ of $N$ (flat with rank $r(N)-1$).
Moreover, we have shown that $D\cap Q=\emptyset$, that is: $Q$ is contained in this hyperplane $E\s D$ of $N$. All this is true for any $f\in P$, so let us consider the intersection of the associated hyperplanes $E\s D$ when $f$ ranges $P$. We get that the flat spanned by $B\s P$ has rank $r(N)-|P|$ and that $Q$ is contained in this flat.
So we have shown that $r_N(A)=r(N)-|P|$.

Finally, we have $r_M(A)-r_N(A)=|B|-|P|-(r(N)-|P|)=|B|-r(N)=r_M(B)-r_N(B)$.
\end{proof}

Now, let us consider the main theorem below. This general Tutte polynomial expansion formula, known for matroids from \cite{GoTr90} and generalized to matroid perspectives in \cite{LV13}, contains a lot of formulas by suitable variable choices, see \cite{GoTr90, LV13}. Notably, a charm of this formula is that it contains the two formulas in three variable previously given: the first (in terms of rank function) by using the variables $(1,x-1,1,y-1,z)$, the second (in terms of activities for independent/spanning subsets) by using the variables $(x,0,y,0,z)$.
We give below a short natural proof, 
highlighting how this formula is the enumerative counterpart of the fundamental structural partition into intervals addressed above.
The proof in \cite{LV13}  is in terms of expressions of derivatives of the Tutte polynomial, which is indirectly equivalent by means of Taylor's theorem, but which is not combinatorially striking the same way.

%
%
%

\ss

\begin{thm}[{\cite{LV13}}]
\label{th:Tutte-4-variables-perspective}
Let $M\ra N$ be a matroid perspective on a linearly ordered set $E$. We have
$$t(M,N;x+u,y+v,z)=\sum_{A\subseteq E}\ x^{\mid \Int_N(A)\mid}\ u^{\mid P_N(A)\mid}\ y^{\mid \Ext_M(A)\mid}\ v^{\mid Q_M(A)\mid}\ z^{r(M)-r(N)-\bigl(r_M(A)-r_{N}(A)\bigr)}$$
\end{thm}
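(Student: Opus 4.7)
The plan is to mirror the structure of the proof of Theorem~\ref{th:expansion-reorientations}: start from the right-hand side, reorganize it using the fundamental partition of $2^E$ into boolean intervals, apply the binomial formula within each interval, and recognize the result as the known expansion in terms of independent/spanning subset activities.

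\textbf{Step 1: reorganize the sum via the partition.} Denote the right-hand side by $[\mathrm{Exp}]$. Using the partition
$$2^E\ =\ \biguplus_{\substack{B\subseteq E\\ B\text{ independent in }M\\ \text{ and spanning in }N}}\Bigl[\ B\setminus \Int_N(B),\ B\cup\Ext_M(B)\ \Bigr]$$
recalled above, I split $[\mathrm{Exp}]$ as a double sum: first over such $B$, then over $A=B\setminus P\cup Q$ with $P\subseteq\Int_N(B)$ and $Q\subseteq\Ext_M(B)$.

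\textbf{Step 2: rewrite the summand using the stated relations.} For $A$ in the interval associated with $B$, the four identities for $\Int_N(A)$, $P_N(A)$, $\Ext_M(A)$, $Q_M(A)$ recalled just before Proposition~\ref{lem:rcd} give $|\Int_N(A)|=|\Int_N(B)|-|P|$, $|P_N(A)|=|P|$, $|\Ext_M(A)|=|\Ext_M(B)|-|Q|$, and $|Q_M(A)|=|Q|$. Crucially, Proposition~\ref{lem:rcd} guarantees that the $z$-exponent $r(M)-r(N)-(r_M(A)-r_N(A))$ depends only on $B$, not on the choice of $A$ within the interval. So the contribution of the interval indexed by $B$ becomes
$$z^{r(M)-r(N)-(r_M(B)-r_N(B))}\sum_{\substack{P\subseteq \Int_N(B)\\ Q\subseteq \Ext_M(B)}} x^{|\Int_N(B)|-|P|}u^{|P|}y^{|\Ext_M(B)|-|Q|}v^{|Q|}.$$

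\textbf{Step 3: apply the binomial theorem twice.} The inner double sum factors over $P$ and $Q$ and collapses via the binomial formula to
$$(x+u)^{|\Int_N(B)|}\,(y+v)^{|\Ext_M(B)|}.$$
This transformation is the exact analogue, in the subset setting, of what is done between Definition~\ref{def:act-classes} and Theorem~\ref{th:expansion-reorientations} for activity classes of reorientations.

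\textbf{Step 4: identify the resulting sum.} Summing over $B$ independent in $M$ and spanning in $N$, we obtain
$$[\mathrm{Exp}]\ =\ \sum_{B} (x+u)^{|\Int_N(B)|}(y+v)^{|\Ext_M(B)|}z^{r(M)-r(N)-(r_M(B)-r_N(B))},$$
which is precisely the Las~Vergnas expression of $t(M,N;x+u,y+v,z)$ in terms of subset activities of independent/spanning sets from \cite{LV80,LV99} recalled above. This concludes the proof. The only nontrivial ingredient is Proposition~\ref{lem:rcd}, which is what makes the $z$-exponent constant on each interval of the partition; without it, the factorization in Step~3 would not be possible and the binomial collapse would fail.
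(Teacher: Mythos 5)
Your proof is correct and takes essentially the same route as the paper's: both rest on the partition of $2^E$ into the boolean intervals $[B\setminus \Int_N(B),\, B\cup \Ext_M(B)]$, the four activity relations within each interval, Proposition~\ref{lem:rcd} to make the $z$-exponent constant on each interval, and the binomial formula. The only difference is one of direction --- you start from the right-hand side and collapse it interval by interval, whereas the paper starts from the known independent/spanning expansion of the left-hand side and expands it --- so the two arguments are the same computation read in opposite orders.
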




\begin{proof} 
\parskip=2mm
\noindent
For $A\subseteq E$, let us denote $rcd_{M,N}(A)=r(M)-r(N)-\bigl(r_M(A)-r_{N}(A)\bigr)$. 
%
By the expression of $t(M;x+u,y+v)$ recalled above, we have

\noindent$\displaystyle t(M,N;x+u,y+v,z)=\sum_{\substack{B\subseteq E\\ B\text{ independant in }M\\ \text{ and spanning in } N}}\ (x+u)^{\mid \Int_N(B)\mid}\ (y+v)^{\mid \Ext_M(B)\mid}\ \ z^{rcd_{M,N}(B)}.$

%
%
\noindent By the binomial formula, this expression equals

\noindent $\displaystyle  \sum_{\substack{B\subseteq E\\ B\text{ independant in }M\\ \text{ and spanning in } N}}\ 
\Bigl(\ \sum_{A'\subseteq \Int_N(B)} x^{\mid A'\mid}\ u^{\mid \Int_N(B)\s A'\mid}\ \Bigr)\ 
\Bigl(\ \sum_{A''\subseteq \Ext_M(B)} y^{\mid \Ext_M(B)\s A''\mid}\ v^{\mid A''\mid}\ \Bigr)\ z^{rcd_{M,N}(B)}.$


\noindent Since $Int_N(B)\subseteq B$ and $Ext_M(B)\subseteq E\s B$, we have  $\Int_N(B)\cap \Ext_M(B)=\emptyset$. So, one has a bijection between pairs $(A',A'')$ involved in the above expression and subsets $A=A'\uplus A''$ of $\Int_N(B)\uplus \Ext_M(B)$, hence this expression equals

\noindent $\displaystyle  \sum_{\substack{B\subseteq E\\ B\text{ independant in }M\\ \text{ and spanning in } N}}\ 
\Bigl(\ \sum_{A \subseteq \Int_N(B)\uplus \Ext_M(B)} x^{\mid \Int_N(B)\cap A\mid}\ u^{\mid \Int_N(B)\s A\mid}\  y^{\mid \Ext_M(B)\s A\mid}\ v^{\mid \Ext_M(B)\cap A\mid}\ \Bigr)\ z^{rcd_{M,N}(B)}.$


\noindent
Since 
$\bigl(\ B\setminus \Int_{N}(B)\ \bigr)\cap\bigl(\ \Int_N(B)\uplus \Ext_M(B)\ \bigr)=\emptyset$, 
the mapping $A\mapsto A\cup\bigl(\ B\setminus \Int_{N}(B)\ \bigr)$ yields an isomorphism between 
the two boolean intervals $[\emptyset,\ \Int_N(B)\uplus \Ext_M(B)]$ and
$[B\s \Int_{N}(B), B\cup \Ext_M(B)]$, 
which does not change the sets $\Int_N(B)\cap A$, $\Int_N(B)\s A$, $\Ext_M(B)\s A$, and $\Ext_M(B)\cap A$. 
So the above expression can be equivalently written


\noindent $\displaystyle  \sum_{\substack{B\subseteq E\\ B\text{ independant in }M\\ \text{ and spanning in } N}}\ 
\Bigl(\ \sum_{\substack{A \in \bigl[B\s \Int_{N}(B),\\ \hphantom{A \in \bigl[}B\cup \Ext_M(B)\bigr]}} x^{\mid \Int_N(B)\cap A\mid}\ u^{\mid \Int_N(B)\s A\mid}\  y^{\mid \Ext_M(B)\s A\mid}\ v^{\mid \Ext_M(B)\cap A\mid}\ \Bigr)\ z^{rcd_{M,N}(B)}.$

%

\noindent By Proposition \ref{lem:rcd}, we have $rcd_{M,N}(B)=rcd_{M,N}(A)$ for $B$ independent in $M$ and spanning in $N$ and for $A\in [B\s \Int_{N}(B), B\cup \Ext_M(B)]$. Hence the above expression equals

\noindent $\displaystyle  \sum_{\substack{B\subseteq E\\ B\text{ independant in }M\\ \text{ and spanning in } N}}\ 
\sum_{\substack{A \in \bigl[B\s \Int_{N}(B),\\ \hphantom{A \in \bigl[}B\cup \Ext_M(B)\bigr]}} x^{\mid \Int_N(B)\cap A\mid}\ u^{\mid \Int_N(B)\s A\mid}\  y^{\mid \Ext_M(B)\s A\mid}\ v^{\mid \Ext_M(B)\cap A\mid}\ z^{rcd_{M,N}(A)}.$

\noindent Since $2^E$ is the disjoint union of intervals $[B\s \Int_{M}(B), B\cup \Ext_M(B)]$ for $B$ independent in $M$ and spanning in $N$ (as recalled above), this expression equals
%

\noindent $\displaystyle  \sum_{A\subseteq E}\ x^{\mid \Int_N(B)\cap A\mid}\ u^{\mid \Int_N(B)\s A\mid}\  y^{\mid \Ext_M(B)\s A\mid}\ v^{\mid \Ext_M(B)\cap A\mid}\ z^{rcd_{M,N}(A)}.$

\noindent Finally, replacing the exponents as
recalled above, this expression equals the required one.
\end{proof}

\bibliographystyle{alpha}
\small
\bibliography{references-expansions}

\newcommand{\etalchar}[1]{$^{#1}$}
\begin{thebibliography}{BGdON09}

\bibitem[BGdON09]{BrGONo09}
J.M. Brunat, A.~Guedes~de Oliveira, and M.~Noy.
\newblock Partitions of a finite boolean lattice into intervals.
\newblock {\em European J. Combin.}, 30:1801–--1809, 2009.

\bibitem[Bj{\"o}92]{Bj92}
A.~Bj{\"o}rner.
\newblock Homology an shellability of matroids and geometric lattices.
\newblock In N.~White, editor, {\em Matroid applications}, volume~40 of {\em
  Encyclopedia of Mathematics and Its Applications}. Cambridge University
  Press, 1992.

\bibitem[BLVS{\etalchar{+}}99]{OM99}
A.~Bj{\"o}rner, M.~Las~Vergnas, B.~Sturmfels, N.~White, and G.~Ziegler.
\newblock {\em Oriented Matroids}, volume~46 of {\em Encyclopedia of
  Mathematics and Its Applications}.
\newblock Cambridge University Press, second edition, 1999.

\bibitem[Cra69]{Cr69}
H.H. Crapo.
\newblock The tutte polynomial.
\newblock {\em Aequationes Mathematicae}, 3:211--229, 1969.

\bibitem[Daw81]{Da81}
J.E. Dawson.
\newblock A construction for a family of sets and its application to matroids.
\newblock In Springer, editor, {\em Lect. Notes in Math. (Comb. Math. VIII,
  Gelong, 1980)}, volume 884, pages 136--147, 1981.

\bibitem[Gioa]{GiChapterPerspectives}
E.~Gioan.
\newblock The {T}utte polynomial of matroid perspectives.
\newblock In J.~Ellis-Monaghan and I.~Moffatt, editors, {\em Handbook of the
  Tutte Polynomial}, CRC Monographs and Research Notes in Mathematics.
\newblock Submitted.

\bibitem[Giob]{GiChapterOriented}
E.~Gioan.
\newblock The {T}utte polynomial of oriented matroids.
\newblock In J.~Ellis-Monaghan and I.~Moffatt, editors, {\em Handbook of the
  Tutte Polynomial}, CRC Monographs and Research Notes in Mathematics.
\newblock Submitted.

\bibitem[Gio02]{Gi02}
E.~Gioan.
\newblock {\em Correspondance naturelle entre bases et r\'eorientations des
  matro{\"\i}des orient\'es}.
\newblock PhD thesis, University of Bordeaux 1, 2002.
\newblock (available at {\tt http://www.lirmm.fr/~gioan}).

\bibitem[GLVa]{AB2-b}
E.~Gioan and M.~Las~Vergnas.
\newblock The active bijection - 2.b - {D}ecomposition of activities for
  oriented matroids, and general definitions of the active bijection.
\newblock {\em Submitted, preprint available at arXiv:1807.06578}.

\bibitem[GLVb]{ABG2}
E.~Gioan and M.~Las~Vergnas.
\newblock The active bijection for graphs.
\newblock {\em Submitted, preprint available at arXiv:1807.06545}.

\bibitem[GLV05]{GiLV05}
E.~Gioan and M.~Las~Vergnas.
\newblock Activity preserving bijections between spanning trees and
  orientations in graphs.
\newblock {\em Discrete Mathematics}, 298:169--188, 2005.

\bibitem[GLV07]{GiLVEurocomb07}
E.~Gioan and M.~Las~Vergnas.
\newblock Fully optimal bases and the active bijection in graphs, hyperplane
  arrangements, and oriented matroids.
\newblock {\em Electronic Notes in Discrete Mathematics}, 29:365--371, 2007.
\newblock Proceedings EuroComb 2007 (Sevilla).

\bibitem[GM97]{GoMM97}
G.~Gordon and E.~McMahon.
\newblock Interval partitions and activities for the greedoid tutte polynomial.
\newblock {\em Adv. Appl. Math.}, 18:33--49, 1997.

\bibitem[GT90]{GoTr90}
G.~Gordon and L.~Traldi.
\newblock Generalized activities and the {T}utte polynomial.
\newblock {\em Disc. Math.}, 85:167--176, 1990.

\bibitem[Kun86]{Ku86}
J.~Kung.
\newblock Strong maps.
\newblock In N.~White, editor, {\em Theory of matroids}, volume~26 of {\em
  Encyclopedia of Mathematics and Its Applications}. Cambridge University
  Press, 1986.

\bibitem[LV80]{LV80}
M.~Las~Vergnas.
\newblock On the {T}utte polynomial of a morphism of matroids.
\newblock {\em Annals of Discrete Mathematics}, 8:7--20, 1980.

\bibitem[LV84]{LV84}
M.~Las~Vergnas.
\newblock The {T}utte polynomial of a morphism of matroids {II.} {A}ctivities
  of orientations.
\newblock In J.A. Bondy and U.S.R. Murty, editors, {\em Progress in Graph
  Theory}, pages 367--380. Academic Press, Toronto, Canada, 1984.
\newblock (Proc. Waterloo Silver Jubilee Conf. 1982).

\bibitem[LV99]{LV99}
M.~Las~Vergnas.
\newblock The {T}utte polynomial of a morphism of matroids {I.} {S}et-pointed
  matroids and matroid perspectives.
\newblock {\em Ann. Inst. Fourier, Grenoble}, 49(3):973--1015, 1999.

\bibitem[LV01]{LV01}
M.~Las~Vergnas.
\newblock Active orders for matroid bases.
\newblock {\em Europ. J. Comb.}, 22:709--721, 2001.

\bibitem[LV12]{LV12}
M.~Las~Vergnas.
\newblock The {T}utte polynomial of a morphism of matroids {6.} {A}
  multi-faceted counting formula for hyperplane regions and acyclic
  orientations.
\newblock 2012.
\newblock Unpublished, preliminary preprint available at arXiv 1205.5424.

\bibitem[LV13]{LV13}
M.~Las~Vergnas.
\newblock The {T}utte polynomial of a morphism of matroids {5.} {D}erivatives
  as generating functions of {T}utte activities.
\newblock {\em Europ. J. Comb.}, 34:1390--1405, 2013.

\bibitem[Oxl11]{Ox92}
J.G. Oxley.
\newblock {\em Matroid Theory}.
\newblock Oxford Graduate Texts in Mathematics. Oxford University Press, 2011.
\newblock Second edition (first edition in 1992).

\bibitem[Tut54]{Tu54}
W.T. Tutte.
\newblock A contribution to the theory of chromatic polynomials.
\newblock {\em Canadian J. Math.}, 6:80--91, 1954.

\end{thebibliography}

\end{document}